\def\cal{\mathcal}
\newtheorem{theorem}{Theorem}[section]
\newtheorem{proposition}{Proposition}[section]
\newtheorem{lemma}{Lemma}[section]
\newtheorem{corollary}{Corollary}[section]
\theoremstyle{definition}
\newtheorem{definition}{Definition}[section]
\newtheorem{remark}{Remark}
\newtheorem{Example}{Example}
\newtheorem{Assumptions}{Hypothesis}[section]
\def\R{{\mathbb{R}}}
\def\N{{\mathbb{N}}}
\def\itau12{\int_{\tau_1}^{\tau_2}}
\def\ve{\varepsilon}
\def\ds{\displaystyle}
\newcommand{\cY}{{\mathcal Y}}
\newcommand{\cA}{{\mathcal A}}
\title {Boundary controllability for a degenerate wave equation in non divergence form with drift}
\author{{\sc Idriss Boutaayamou}\\
Lab-SIV,
Facult\'e Polydisciplinaires de Ouarzazate,\\
Universit\'e Ibn Zohr,\\
B.P. 638, Ouarzazate 45000, Morocco
\\  email:
dsboutaayamou@gmail.com\\
{\sc Genni Fragnelli}\\
Department  of Ecology and Biology\\ Tuscia University\\ Largo dell'Universit\`a, 01100 Viterbo - Italy\\ email: genni.fragnelli@uniba.it\\
{\sc Dimitri Mugnai}\\
Department  of Ecology and Biology\\ Tuscia University\\ Largo dell'Universit\`a, 01100 Viterbo - Italy\\ email:
dimitri.mugnai@unitus.it}
\date{}
\begin{document}

\maketitle

\begin{abstract}
We consider a degenerate wave equation with drift in presence of a leading operator which is not in divergence form. We provide some conditions for the boundary controllability for the solution of the associated Cauchy problem at a sufficiently large time.
\end{abstract}

Keywords: 
degenerate equation, drift, boundary observability, null controllability

MSC: 35L10, 35L80, 93B05, 93B07, 93D15

\section{Introduction}

It is well know that standard linear theory for transverse waves in a string of length $L$  under tension leads to the classical wave equation
\[
\rho(x)\frac{\partial^2 u}{\partial t^2}(t,x)=\frac{\partial \mathcal T}{\partial x}(t,x)\frac{\partial u}{\partial x}(t,x)+\mathcal  T(t,x)\frac{\partial^2 u(t,x)}{\partial x^2}.
\]
Here $u(t,x)$ denotes the vertical displacement of the string from the $x$ axis at position $x\in (0,L)$ and time $t>0$, $\rho(x)$ is the mass density of the string at position $x$, while $\mathcal T(t,x)$ denotes the tension in the string at position $x$ and time $t$. Divide by $\rho(x)$, assume $\mathcal T$ is independent of $t$, and set  $a(x)=\mathcal T(x)\rho^{-1}(x)$, $b(x)=\mathcal T'(x)\rho^{-1}(x)$. In this way, we obtain
\[
\frac{\partial^2 u}{\partial t^2}(t,x)=a(x)\frac{\partial^2 u(t,x)}{\partial x^2}+b(x)\frac{\partial u}{\partial x}(t,x).
\]
Now, suppose that the density is extremely large at some point, say $x=0$, then the previous equation {\sl degenerates} at $x=0$, in the sense that we can consider $a(0)=0$, while the remainder term is a drift one.

Starting from this model, we consider the  problem
\begin{equation}\label{wave}
\begin{cases}
u_{tt} - a(x)u_{xx} -b(x)u_x=0, & (t,x) \in Q_T,
\\
u(t, 0)= 0, \quad u(t,1)=f(t), & t >0,
\\
u(0,x)= u_0(x), \quad u_t(0,x)=u_1(x),& x \in (0, 1),
\end{cases}
\end{equation}
\\[5pt]
where $Q_T=(0,T) \times(0,1)$, $T>0$. The main feature in this problem is that $a$ degenerates at $x=0$, and the function $f$ acts as a boundary control and it is used to drive the solution to 0 at a sufficiently large time $T>0$ .

This problem has not been treated before, but its study seems natural because of the model above. Moreover, it is related to other ones: the problem without drift and with leading operator in divergence form was considered in \cite{alabau,zg}, but it is well known that problems in non divergence or in divergence form require different settings. Moreover, the drift term $bu_x$ cannot be considered just as a small perturbation of the diffusion term $au_{xx}$ (see \cite[Chapter 7.2]{favya}). These three motivations are the main ones to investigate problem \eqref{wave}.

We will look for a control $f \in L^2(0,T)$, knowing that $a, b\in C^0[0,1]$, $a>0$ on
$(0,1]$ and $a(0)=0$. Concerning $b$, it can possibly degenerate at $0$. Indeed, we will just require that $\ds \frac{b}{a} \in L^1(0,1)$: hence, if $a(x)=x^K$, $K>0$, we can consider $b(x)=x^h$ for any $h>K-1$. Finally, the initial data $u_0$ and $u_1$ belong to suitable weighted spaces.

As for the function $a$, we consider two cases, according to the following
\begin{definition}
A function $a$ is {\it weakly degenerate at $0$}, (WD) for short, if $a\in C^0[0,1]\cap C^1(0,1]$ is such that
$a(0)=0$, $a>0$ on $(0,1]$ and, if 
\begin{equation}\label{stima_a}
\sup_{x \in (0,1]}\frac{x|a'(x)|}{a(x)} = K,
\end{equation}
then $K\in (0,1)$.
\end{definition}

\begin{definition}
A function $a$ is {\it strongly degenerate at $0$}, (SD) for short, if $a\in C^1[0,1]$ is such that
$a(0)=0$, $a>0$ on $(0,1]$ and  in
\eqref{stima_a} we have $K\in [1,2)$.
\end{definition}

We shall always assume that $K<2$, since, as in the parabolic case, observability (and thus null controllability) no longer holds true if $K \ge 2$, see Section \ref{section3} below.

\begin{remark}
Clearly, when $a$ is strongly degenerate we cannot treat the case $b$ strictly positive, since we would have $\ds \frac{1}{a} \not \in L^1(0,1)$ Hence, if $a$ is strongly degenerate at $0$, then $b$ must degenerate at the same point, as well.
\end{remark}

Before going on, let us frame \eqref{wave}. Controllability issues for parabolic problems have been a mainstream topic in
recent years: starting from the
heat equation, related contributions have
been found in more general situations. A common strategy in showing controllability is to prove global Carleman estimates  for
the operator which is the adjoint of the given one.
Such estimates for uniformly parabolic operators without degeneracies or
singularities have been largely developed (see, e.g., \cite{fi}).
Recently, these estimates have been also studied for operators which are not
uniformly parabolic. Indeed,  many problems
coming from Physics and Biology (see \cite{KaZo}), Biology (see \cite{bf},  \cite{bfm}, \cite{epma}, \cite{f2018},\cite{f2020} and \cite{fy}) or Mathematical Finance
(see \cite{HW}) are described by degenerate parabolic equations. In this framework, new Carleman estimates and null controllability properties have been established in \cite{acf}, \cite{cmv2005}, \cite{cmv0}, \cite{fm2013} and \cite{mv} for regular degenerate coefficients, in \cite{bfm} and \cite{fm2016}, \cite{corrigendum} for non smooth degenerate coefficients and in \cite{fs}, \cite{fs1},  \cite{f2016}, \cite{fm2017}, \cite{fm2018}, \cite{fm2020} and in \cite{v} for degenerate and singular coefficients. 

On the contrary, null controllability for degenerate wave equations have received less attention. Let us recall that  null controllability for the one dimensional {\it nondegenerate} wave equation can be attacked in several ways: for instance,  consider
\begin{equation}\label{classica}
\begin{cases}
u_{tt} - u_{xx}=f_\omega(t,x), & (t,x) \in Q_T,
\\
u(t, 0)= 0, \quad u(t,1)=f(t), & t \in (0,T),
\\
u(0,x)= u_0(x)\in H^1_0(0,1), \quad u_t(0,x)=u_1(x)\in L^2(0,1),& x \in (0, 1).
\end{cases}
\end{equation}
Here $u$ is the state, while $f_\omega$ and $f$ are the controls: one may have $f=0$ and $f_\omega$ acting as a control localized in the subset $\omega$ of $[0,1]$, or $f_\omega=0$ and $f$ acting as a boundary control. In any case, one looks for conditions in order to drive the solution to equilibrium at a given time $T$, i.e. given the initial data $(u_0, u_1)$ in a suitable space, we look for a control ($f$ or $f_\omega$) such that
\begin{equation}\label{NC}
u(T,x)=u_t(T,x)=0, \quad \text{ for all } x \in (0,1). 
\end{equation}

Some comments are in order. First, due to the finite speed of propagation of solutions of the wave equation, we cannot expect to have null controllability at any final time $T$ (as in the parabolic case), but we need $T$ sufficiently large: for equation \eqref{classica} it is well known that null controllability holds if $T>2$, see \cite[Chapter 4]{Russell}. Second, the Hilbert Uniqueness Method (HUM) permits to characterize such a control in terms of minimum of a certain functional. In details, following \cite{lions}, in analogy with problem \eqref{wave}, consider \eqref{classica} with  $f_\omega=0$ and its adjoint system
\begin{equation}\label{AD}
\begin{cases}
\varphi_{tt} - \varphi_{xx}=0, & (t,x) \in Q_T,
\\
\varphi(t, 0)= \varphi(t,1)=0, & t \in (0,T),
\\
\varphi(0,x)= \varphi_0(x)\in H^1_0(0,1), \quad \varphi_t(0,x)=\varphi_1(x)\in L^2(0,1)& x \in (0,1).
\end{cases}
\end{equation}
Then, \eqref{NC} is equivalent to the validity of the following  {\it observability inequality}
\begin{equation}\label{sopra}
\|(\varphi_0,\varphi_1)\|^2_{H^1_0(0,1)\times L^2(0,1)}\leq C\int_0^T |\varphi_x(t,1)|^2\,dt,
\end{equation}
where $C=C(T)>0$ is a universal constant. Such an inequality guarantees the coercivity of the functional $J:H^1_0(0,1)\times L^2(0,1)\to \R$ defined as
\[
J(\varphi_0,\varphi_1)=\frac{1}{2}\int_0^T|\varphi_x(t,1)|^2dt+\int_0^1\varphi_1u_0dx-\langle \varphi_0,u_1\rangle_{H^{-1},H^1_0},
\]
where $\varphi$ solves \eqref{AD} with initial data $(\varphi_0,\varphi_1)$. As a consequence, $J$ has a unique minimizer $(\tilde\varphi_0,\tilde\varphi_1)$. Then, the control $f$ which drives the solution of \eqref{classica} to 0 at time $T$ is given by
\[
f(t)=\tilde\varphi_x(t.1),
\]
where $\tilde \varphi$ is the solution of \eqref{AD} with initial data  $(\tilde\varphi_0,\tilde\varphi_1)$. A related approach 
for \eqref{classica} with $f=0$ is showed in \cite{zuazua1}.

In \cite{zuazua0} the authors investigate the problem of maximizing the observability constant, or its asymptotic average
in time, over all possible subsets $\omega$ of $[0, \pi]$ of Lebesgue measure $L\pi$. 
In \cite{o} the author considers the controllability problem of a semilinear wave equation with a control multiplying the  nonlinear term.

To our knowledge, \cite{gu} is  the first paper where a degenerate wave equation is considered: more precisely, the author considers the equation in divergence form
\[
u_{tt}-(x^\alpha u_x)_x=0
\]
for $\alpha\in (0,1)$ and the control acts in the degeneracy point $x=0$. Later on, in \cite{alabau} the authors consider the equation
\[
u_{tt}-(a(x)u_x)_x=0
\]
where $a\sim x^K$. In this  case the authors establish observability
inequalities for weakly  as well as strongly degenerate equations. They
also prove a negative result when the diffusion coefficient degenerates too violently (i.e. when $K \geq 2$) and
the blow-up of the observability time when $K \uparrow 2$. We remark that the observability inequality, and hence null controllability, in \cite{gu} is obtained via spectral analysis, while in \cite{alabau} via suitable energy estimates.
We also mention the recent paper \cite{s}, where the author studies \eqref{classica} with two linearly moving endpoints, establishing observability results in a sharp time and deriving exact boundary controllability results.
\medskip

As far as we know, this is the first paper where the equation is in non divergence form and $b \not \equiv 0$. Clearly the presence of the drift term leads us to use different spaces with respect to the ones in \cite{alabau} and gives rise to some new difficulties. However, thanks to some suitable assumptions on the drift term, one can prove some estimates that are crucial to prove an observability similar to \eqref{sopra}, namely 
\begin{equation}\label{OI}
E_\varphi(0) \le C_T \int_0^T |\varphi_x(t,1)|^2 dt,
\end{equation}
where $C_T$ is a strictly positive universal constant.  Here $E_\varphi$ denotes the energy of $\varphi$, solution of \eqref{AD}, see \eqref{def_energy} below for its precise definition.

\textcolor{black}{We remark that in our situation, the non divergence form of the leading operator forces to use reference Hilbert spaces which are different from those in \cite{alabau}. Indeed, the natural framework must take into account such a function, and so we are forced to introduce weighted spaces with a singular weight ($\sigma$), which gives some complications in the boundary behaviour (see Lemma \ref{lemmalimite}). It must be noticed that such a weight appears independently of the presence of the drift term and in fact it reduces to $a$ when no drift is given.}

\textcolor{black}{As a consequence, the tools used in \cite{alabau} cannot be simply adapted. For instance, the choice of  the multipliers done therein  in some cases doesn't work without any further assumption. For this reason,  in the (SD) case an additional assumption is in order, see Hypothesis \ref{Ass3new}.}

\textcolor{black}{Moreover, the presence of the drift term introduces another complication. Indeed,  without assuming any other special behaviour on the drift term $b$ (such as sign or growth conditions), we cannot expect that necessarily it is an irrelevant lower order term. As a consequence, in order to prove controllability, we will need to assume a "smallness" condition on $b$, which permits to balance the presence of the degenerate coefficient $a$ (see the assumptions in Corollaries \ref{Observability0} and \ref{cor4}).}

\medskip

The paper is organized as follows: in Section \ref{section2} we study well posedness of the original problem with Dirichlet boundary conditions; in Section \ref{section3} we consider the adjoint problem of \eqref{wave} and we prove that for this kind of problem the associated energy is constant through the time. Thanks to this property and to some estimates established in Section \ref{section3}, we  will prove in Section \ref{section5} that \eqref{wave} is null controllable under suitable assumptions.
\medskip

\noindent Notations:\\
 $C$ denotes universal positive constants which are allowed to vary from line to line;\\
$'$ denotes the derivative of a function depending on the real space variable $x$; \\
$\dot\ $ denotes the derivative of a function depending on the real time variable $t$;\\
$y_x^2$ means $(y_x)^2$.

\section{Well posedness for the problem with homogeneous Dirichlet boundary conditions}\label{section2}

We start assuming very low regularity conditions  which will be assumed from now on, but for the functional setting the assumption below is sufficient, while more restrive ones will be needed below.
\begin{Assumptions}\label{basic} Functions $a$ and $b$ are continuous in $[0,1]$ with
$\dfrac{b}{a}\in L^1(0,1)$.
\end{Assumptions}

Consider the degenerate hyperbolic problem
 with Dirichlet boundary conditions
\begin{equation}\label{adjointy}
\begin{cases}
y_{tt} -ay_{xx}-by_x=0, & (t,x) \in (0,+\infty)\times(0,1),\\
y(t,1)=y(t,0)=0, & t \in (0, +\infty),\\
y(0,x) = y^0_T(x), & x \in (0,1),\\
y_t(0,x) = y^1_T(x), & x \in (0,1).
\end{cases}
\end{equation}
We anticipate the fact that the choice of denoting initial data with $T-$dependence is related to the approach for null controllability used in Section \ref{section5}.

In order to study the well-posedness of \eqref{adjointy}, let us
recall the well-known absolutely continuous weight function
\[
\eta(x):=\exp\left \{\int_{\frac{1}{2}}^x\frac{b(s)}{a(s)}ds\right \}
, \quad x\in [0,1],
\]
introduced by Feller in a related context \cite{F1} and used by
several authors, see \cite{cfr}, \cite{favya} and \cite{mp}. Since $\frac{b}{a}\in L^1(0,1)$, we find that $\eta\in C^0[0,1]\cap
C^1(0,1]$ is a strictly positive function. When $b$ degenerates at 0 not slower than $a$, for instance if $a(x)=x^K$ and $b(x)=x^h$,  $K\leq h$, then $\eta$ can be extended to a function of class $C^1[0,1]$. Now set
\[
\sigma(x):=a(x)\eta^{-1}(x),
\]
and observe that if $y$ is sufficiently smooth, e.g. $y \in
W^{2,1}_{\text{loc}}(0,1)$, then
\begin{equation}\label{defsigma}
Ay:=ay_{xx}+by_x= \sigma(\eta y_x)_x.
\end{equation}

As in \cite{cfr}, we introduce
the following Hilbert spaces with related inner products:
\[
 L^2_{\frac{1}{\sigma}}(0,1) :=\left\{ u \in L^2(0,1)\; \big|\; \|u\|_{ \frac{1}{\sigma}}<\infty \right\},
 \;  \langle u,v\rangle_{\frac{1}{\sigma}}:= \int_0^1u v\frac{1}{\sigma}dx;
\]
\[
H^1_{\frac{1}{\sigma}}(0,1) :=L^2_{\frac{1}{\sigma}}(0,1)\cap
H^1_0(0,1),
\;  \langle u,v\rangle_1 :=   \langle u,v\rangle_{\frac{1}{\sigma}} + \int_0^1 u'v'dx;
\]
\[
H^2_{\frac{1}{\sigma}}(0,1) := \Big\{ u \in
H^1_{\frac{1}{\sigma}}(0,1)\; \big|\;Au \in
L^2_{\frac{1}{\sigma}}(0,1)\Big\},
\;   \langle u,v\rangle_2 := \langle u,v\rangle_1+  \langle Au,Av\rangle_{\frac{1}{\sigma}}.
\]

It is clear that the following integration by parts holds.
\begin{lemma}
If $u\in H^2_{{\frac{1}{\sigma}}}(0,1)$ and $v\in H^1_{{\frac{1}{\sigma}}}(0,1)$, then
\[
\int_0^1(\eta u')'v\,dx=-\int_0^1\eta u'v'dx.
\]
\end{lemma}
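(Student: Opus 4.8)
The plan is to establish the identity by integrating by parts on a truncated interval $[\ve,1]$, with $\ve\in(0,1/2)$, and then letting $\ve\to0^+$; the only genuine issue is to show that the boundary contribution at the degenerate endpoint $x=0$ vanishes.

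First I would note that $\eta u'\in H^1(\ve,1)$ for every $\ve\in(0,1/2)$. Indeed $u'\in L^2(0,1)$ because $u\in H^1_0(0,1)$, and $\eta\in C^1(0,1]$ is strictly positive, so $\eta u'\in L^1(0,1)$ and its distributional derivative makes sense; by the definition of $H^2_{\frac{1}{\sigma}}(0,1)$ together with \eqref{defsigma}, one has $(\eta u')'=\frac{Au}{\sigma}$ with $Au\in L^2_{\frac{1}{\sigma}}(0,1)$. On $[\ve,1]$ the weight $\sigma=a\eta^{-1}$ is continuous and bounded below by a positive constant, hence $Au\in L^2(\ve,1)$ and $(\eta u')'\in L^2(\ve,1)$, i.e. $\eta u'\in H^1(\ve,1)\subset C[\ve,1]$. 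Since also $v\in H^1(\ve,1)\cap L^\infty(\ve,1)$, integration by parts gives
\[
\int_\ve^1(\eta u')'v\,dx=\eta(1)u'(1)v(1)-\eta(\ve)u'(\ve)v(\ve)-\int_\ve^1\eta u'v'\,dx,
\]
and the contribution at $x=1$ vanishes because $v(1)=0$.

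Next I would let $\ve\to0^+$. Both integrands are in $L^1(0,1)$: for the first, $\bigl|(\eta u')'v\bigr|=\frac{|Au|}{\sigma}|v|$ and Cauchy--Schwarz gives $\int_0^1\frac{|Au|}{\sigma}|v|\,dx\le\|Au\|_{\frac{1}{\sigma}}\,\|v\|_{\frac{1}{\sigma}}<\infty$, since $v\in H^1_{\frac{1}{\sigma}}(0,1)\subset L^2_{\frac{1}{\sigma}}(0,1)$; for the second, $\eta$ is bounded on $[0,1]$ and $u',v'\in L^2(0,1)$. Therefore both integrals in the displayed identity have finite limits, and consequently the boundary value $\eta(\ve)u'(\ve)v(\ve)$ converges to some $L\in\R$ as $\ve\to0^+$. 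To see that $L=0$, use $v(0)=0$ and $v'\in L^2(0,1)$ to get $|v(\ve)|\le\sqrt\ve\,\|v'\|_{L^2(0,1)}$, so that $|\eta(\ve)u'(\ve)v(\ve)|\le C\sqrt\ve\,|u'(\ve)|$ with $C$ depending only on $\sup_{[0,1]}\eta$ and $\|v'\|_{L^2(0,1)}$. If $L\neq0$, this forces $|u'(\ve)|\ge\frac{|L|}{2C\sqrt\ve}$ for all small $\ve$, whence $\int_0^\delta|u'(x)|^2\,dx=+\infty$ for $\delta$ small, contradicting $u'\in L^2(0,1)$. Hence $L=0$, and letting $\ve\to0^+$ in the displayed identity yields $\int_0^1(\eta u')'v\,dx=-\int_0^1\eta u'v'\,dx$.

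The step I expect to be the main obstacle is precisely the vanishing of the boundary term at $x=0$: since $\sigma$ is singular there, $\eta u'$ is controlled only on intervals staying away from $0$ and may in principle blow up as $x\to0^+$, so it cannot be evaluated at the endpoint. The device above resolves this by pitting the $H^1_0$-decay $|v(\ve)|=O(\sqrt\ve)$ against the square-integrability of $u'$, which rules out a blow-up fast enough to produce a nonzero limit. Everything else --- the two integrability estimates and the integration by parts on $[\ve,1]$ --- is routine.
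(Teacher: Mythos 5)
Your proof is correct, and it is essentially the argument the paper has in mind: the paper states this lemma without proof (``it is clear''), but the exact same device you use to kill the boundary term at $x=0$ --- showing the limit of $\eta(\ve)u'(\ve)v(\ve)$ exists because both integrals converge, then ruling out a nonzero limit by pitting $|v(\ve)|\le\sqrt{\ve}\,\|v'\|_{L^2}$ against $u'\in L^2(0,1)$ --- is precisely the proof the authors give for part 1 of Lemma \ref{lemmalimite}, which asserts $\lim_{x\to0}u(x)y'(x)=0$ for $y\in D(A)$ and $u\in H^1_{\frac{1}{\sigma}}(0,1)$. No gaps; the truncation at $\ve$, the $L^1$ bounds on both integrands, and the contradiction step are all sound.
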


It turns out that, under reasonable assumptions,  the norm in $H^1_{\frac{1}{\sigma}}(0,1)$  is equivalent to the standard norm
$\int_0^1|u'(x)|^2 dx$ for all $u \in H^1_{\frac{1}{\sigma}}(0,1)$ (see Corollary \ref{equivalenze}).

\begin{Assumptions}\label{Ass0}
Hypothesis $\ref{basic}$ holds. In addition, $a\in C^0[0,1]$ is such that $a(0)=0$, $a>0$ on
$(0,1]$ and there exists $K\in (0,2)$ such that  the function
\[
 x \longmapsto\dfrac{x^K}{a(x)}
\]
is nondecreasing in a right neighborhood of $x=0$.
\end{Assumptions}
Notice that here we don't require any differentiability and that the monotonicity property (which holds globally in $(0,1]$ if $a$ is (WD) or (SD)) is valid only near $0$. Moreover, it is easy to see that the same monotonicity is inherited by the map
\[
x\longmapsto \dfrac{x^\gamma}{a(x)}\]
for any $\gamma\geq K$.

\textcolor{black}{The next result is related to a similar one given in \cite{cfr1} for problems without drift. The proof being similar to that one, we omit it.}
 \begin{proposition}[Hardy-Poincar\'e Inequality]\label{H1a}
 Assume  Hypothesis $\ref{Ass0}$. Then, there exists
 $C>0$ such that
 \begin{equation}\int_0^1 v^2 \frac{1}{\sigma} dx
 \le C \int_0^1(v')^2dx\quad \forall \; v\in H^1_{\frac{1}{\sigma}}(0,1).
 \end{equation}
 \end{proposition}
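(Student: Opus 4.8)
The plan is to prove the Hardy--Poincar\'e inequality $\int_0^1 v^2\frac{1}{\sigma}\,dx\le C\int_0^1(v')^2\,dx$ for all $v\in H^1_{\frac{1}{\sigma}}(0,1)$ by reducing it, via the explicit form of the weight, to a classical one-dimensional Hardy inequality near the degeneracy point $x=0$, and handling the region away from $0$ by the ordinary Poincar\'e inequality. First I would recall that $\sigma = a\eta^{-1}$ with $\eta$ continuous and strictly positive on $[0,1]$; hence $\frac{1}{\sigma} = \frac{\eta}{a}$ and there are constants $0<c_1\le c_2$ with $c_1\frac{1}{a(x)}\le \frac{1}{\sigma(x)}\le c_2\frac{1}{a(x)}$ on $[0,1]$. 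So it suffices to prove $\int_0^1 v^2\frac{1}{a}\,dx\le C\int_0^1(v')^2\,dx$, and the drift has disappeared from the statement — which is exactly why the proof can be quoted from \cite{cfr1}. Next, I would fix $\delta\in(0,1)$ inside the right neighborhood of $0$ where $x\mapsto x^K/a(x)$ is nondecreasing; on $[\delta,1]$ we have $\frac{1}{a}\le \frac{1}{\min_{[\delta,1]}a}$, so $\int_\delta^1 v^2\frac{1}{a}\,dx \le C\int_\delta^1 v^2\,dx\le C\int_0^1(v')^2\,dx$ by the usual Poincar\'e inequality (recall $v\in H^1_0(0,1)$, so $v(0)=0$).

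For the interval $(0,\delta)$ the key is the monotonicity assumption: for $x\in(0,\delta)$ we have $x^K/a(x)\le \delta^K/a(\delta)=:M$, hence $\frac{1}{a(x)}\le \frac{M}{x^K}$. Since $v(0)=0$ and $v$ is absolutely continuous, write $v(x)=\int_0^x v'(s)\,ds$. Now I invoke the classical weighted Hardy inequality: for $K<2$ (equivalently, the Hardy exponent condition $K-1<1$),
\[
\int_0^\delta \frac{v(x)^2}{x^K}\,dx \le \left(\frac{2}{2-K}\right)^2\int_0^\delta \frac{(v'(x))^2}{x^{K-2}}\,dx \le \left(\frac{2}{2-K}\right)^2\delta^{2-K}\int_0^\delta (v'(x))^2\,dx,
\]
using $x^{2-K}\le \delta^{2-K}$ in the last step. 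Combining with $\frac{1}{a(x)}\le M/x^K$ gives $\int_0^\delta v^2\frac{1}{a}\,dx\le C\int_0^\delta (v')^2\,dx$. Adding the two pieces and restoring the constants $c_1,c_2$ yields the claim with $C$ depending only on $K$, $\delta$, $\min_{[\delta,1]}a$, $\|\eta\|_\infty$, $\|\eta^{-1}\|_\infty$ and $a(\delta)$.

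The main obstacle is making the Hardy step rigorous at the level of the function class $H^1_{\frac{1}{\sigma}}(0,1)$: one must be sure that elements of this space are genuinely absolutely continuous up to $0$ with $v(0)=0$ in a strong enough sense to justify $v(x)=\int_0^x v'$, and that the improper integral $\int_0^\delta v^2/x^K\,dx$ is finite a priori (or argue by density from $C_c^\infty(0,1)$ and pass to the limit, which is the cleaner route and is presumably how \cite{cfr1} proceeds). The constant in the standard Hardy inequality degenerates as $K\uparrow 2$, which is consistent with the loss of observability for $K\ge 2$ flagged in the introduction. Beyond these points the argument is routine, which is why the authors legitimately refer to \cite{cfr1} and omit the details.
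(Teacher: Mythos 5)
The paper itself gives no proof of this proposition (it is deferred to \cite{cfr1}), so I can only judge your argument on its own terms. Your overall architecture is the right one and is essentially what \cite{cfr1} does: replace $\frac{1}{\sigma}=\frac{\eta}{a}$ by $\frac{1}{a}$ up to the two-sided bounds on $\eta$, split $(0,1)$ at a point $\delta$ inside the neighborhood where $x\mapsto x^K/a(x)$ is nondecreasing, use that monotonicity to get $\frac{1}{a(x)}\le \frac{\delta^K}{a(\delta)}\,x^{-K}$ on $(0,\delta)$, and handle $[\delta,1]$ by Poincar\'e. Also, your worry about absolute continuity is unfounded: the paper defines $H^1_{\frac{1}{\sigma}}(0,1)=L^2_{\frac{1}{\sigma}}(0,1)\cap H^1_0(0,1)$, so $v(x)=\int_0^x v'(s)\,ds$ is immediate.

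The genuine flaw is in the ``classical weighted Hardy inequality'' you invoke on $(0,\delta)$. The inequality
\[
\int_0^\delta \frac{v^2}{x^K}\,dx \le \left(\frac{2}{2-K}\right)^2\int_0^\delta x^{2-K}(v')^2\,dx
\]
anchored at $v(0)=0$ is \emph{not} the classical Hardy inequality, and it is false for $K\le 1$: taking $v_\ve$ equal to $x/\ve$ on $[0,\ve]$ and to $1$ on $[\ve,\delta]$, the right-hand side is $\left(\frac{2}{2-K}\right)^2\frac{\ve^{1-K}}{3-K}\to 0$ (for $K<1$; it stays bounded for $K=1$), while the left-hand side tends to $\frac{\delta^{1-K}}{1-K}>0$ (resp.\ diverges like $\log(1/\ve)$ for $K=1$). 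The correct one-sided Hardy inequality with $F(x)=\int_0^x f$ requires the exponent condition $K>1$ and has constant $\bigl(\frac{2}{K-1}\bigr)^2$, not $\bigl(\frac{2}{2-K}\bigr)^2$; the condition ``$K-1<1$'' you cite governs the \emph{other} form of Hardy's inequality, anchored at the right endpoint. So as written the key step fails on part of the admissible range of $K$ and carries the wrong constant on the rest.

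Fortunately the step is salvageable by a cruder and uniform argument: Cauchy--Schwarz gives $v(x)^2\le x\int_0^x(v')^2\,ds\le x\int_0^1(v')^2\,ds$, hence
\[
\int_0^\delta \frac{v^2}{x^K}\,dx \le \int_0^1(v')^2\,ds\cdot\int_0^\delta x^{1-K}\,dx=\frac{\delta^{2-K}}{2-K}\int_0^1(v')^2\,ds,
\]
which is exactly where the hypothesis $K<2$ enters and which is valid for all $K\in(0,2)$. Replace your Hardy step by this estimate (or restrict the sharp Hardy inequality to $K>1$ and use Cauchy--Schwarz for $K\le 1$) and the proof is complete.
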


\begin{remark}
It is clear that the previous inequality is closely related to the classical Hardy inequality
\[
\int_0^1\frac{y^2}{x^2} dx \le 4 \int_0^1y_x^2dx  \mbox{ for any $y\in H^1_0(0,1)$,}
\]
which clearly holds true, since $H^1_{\frac{1}{\sigma}}(0,1)\subset H^1_0(0,1).$
\end{remark}

Thanks to the previous proposition, one has that the
 spaces $H^1_0(0,1)$ and $H^1_{\frac{1}{\sigma}}(0,1)$ algebraically coincide. In particular, we have  the equivalence below.
\begin{corollary}
\label{equivalenze}Assume Hypothesis $\ref{Ass0}$. Then the two norms
\[
\|u\|_1^2:= \int_0^1u^2\frac{1}{\sigma}dx +\int_0^1 (u')^2dx
\quad\mbox{ and }\quad \|u\|_2^2:= \int_0^1 (u')^2dx, 
\]
are equivalent in $H^1_{\frac{1}{\sigma}}(0,1)$.
\end{corollary}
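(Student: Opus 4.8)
The plan is to deduce this immediately from the Hardy--Poincar\'e inequality of Proposition \ref{H1a}. The first inequality is trivial: for every $u\in H^1_{\frac{1}{\sigma}}(0,1)$ one has
\[
\|u\|_2^2=\int_0^1 (u')^2\,dx\le \int_0^1 u^2\frac{1}{\sigma}\,dx+\int_0^1 (u')^2\,dx=\|u\|_1^2,
\]
since the added term is nonnegative. Hence $\|u\|_2\le\|u\|_1$ with constant $1$.

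For the reverse inequality I would invoke Proposition \ref{H1a} directly: there is $C>0$ (independent of $u$) with $\int_0^1 u^2\frac{1}{\sigma}\,dx\le C\int_0^1 (u')^2\,dx$ for all $u\in H^1_{\frac{1}{\sigma}}(0,1)$. Adding $\int_0^1 (u')^2\,dx$ to both sides gives
\[
\|u\|_1^2=\int_0^1 u^2\frac{1}{\sigma}\,dx+\int_0^1 (u')^2\,dx\le (C+1)\int_0^1 (u')^2\,dx=(C+1)\|u\|_2^2,
\]
so $\|u\|_1\le\sqrt{C+1}\,\|u\|_2$. Combining the two bounds yields the asserted equivalence.

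The only point deserving a word of care is that $\|\cdot\|_2$ is genuinely a norm on $H^1_{\frac{1}{\sigma}}(0,1)$, i.e. that $\int_0^1 (u')^2\,dx=0$ forces $u\equiv 0$; this is immediate because $H^1_{\frac{1}{\sigma}}(0,1)\subset H^1_0(0,1)$, so the classical Poincar\'e inequality applies. There is really no substantial obstacle here: the corollary is a direct bookkeeping consequence of Proposition \ref{H1a}, and the ``hard part'' (the weighted Hardy-type estimate) has already been done. One could, if desired, also record explicitly that $(H^1_{\frac{1}{\sigma}}(0,1),\|\cdot\|_2)$ is then a Hilbert space, being isometric up to equivalence to the complete space $(H^1_0(0,1),\|\cdot\|_{H^1_0})$, but this is not needed for the statement.
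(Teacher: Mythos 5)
Your argument is correct and is exactly the one the paper intends: the corollary is stated as an immediate consequence of the Hardy--Poincar\'e inequality of Proposition \ref{H1a}, with the trivial direction following from nonnegativity of the weighted $L^2$ term and the reverse direction from adding $\int_0^1(u')^2\,dx$ to both sides of that inequality. No discrepancy with the paper's (unwritten, but clearly implied) proof.
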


Finally, we introduce the last Hilbert space
\[
\mathcal H_0 := H^1_{{\frac{1}{\sigma}}}(0,1)\times L^2_{{\frac{1}{\sigma}}}(0,1), 
\]
endowed with the inner product
\[
\langle (u, v), (\tilde u, \tilde v) \rangle_{\mathcal H_0}:=  \int_0^1\eta u'\tilde u'dx + \int_0^1 v\tilde v\frac{1}{\sigma}dx
\]
for every $(u, v), (\tilde u, \tilde v)  \in \mathcal H_0$, which induces the norm
\[
\|(u,v)\|_{\mathcal H_0}^2:=  \int_0^1 \eta(u')^2dx + \int_0^1 v^2\frac{1}{\sigma}dx.
\]

\begin{remark}
Notice that the weighted norm in the gradient is equivalent to the usual one, since $\eta$ is bounded above and below by positive constants. \textcolor{black}{However, we introduce such a weight to make considerations straightforward in the steps below, in particular for proving that an involved operator is skew-adjoint.} 
\end{remark}

We are now ready to define the domain $D(A)$ of
the operator $A$ given in \eqref{defsigma} as
\[
D(A) = H^2_{{\frac{1}{\sigma}}}(0,1).
\]
Also, $H^{-1}_{{\frac{1}{\sigma}}}(0,1)$ is the dual space of $H^1_{{\frac{1}{\sigma}}}(0,1)$ with respect to the pivot space $L^2_{{\frac{1}{\sigma}}}(0,1)$. 

In order to study the well posedness of problem \eqref{adjointy}, we introduce the matrix operator $\cA : D(\cA) \subset \mathcal H_0 \rightarrow \mathcal H_0$, given by
\[
\cA:= \begin{pmatrix} 0 & Id\\
A&0 \end{pmatrix}, \quad D(\cA):= H^2_{{\frac{1}{\sigma}}}(0,1) \times H^1_{{\frac{1}{\sigma}}}(0,1).
\]
 In this way,  rewrite \eqref{adjointy} as the Cauchy problem
\begin{equation}\label{CP}
\begin{cases}
\dot \cY (t)= \cA \cY (t), & t \ge 0,\\
\cY(0) = \cY_0,
\end{cases}
\end{equation}
with
\[
\cY(t):= \begin{pmatrix} y\\ y_t \end{pmatrix} \; \text{ and }\; \cY_0:= \begin{pmatrix} y^0_T\\ y^1_T \end{pmatrix}.
\]
Now, it is straightforward to see that for every $\begin{pmatrix} u\\ v \end{pmatrix}\in D(\cA)$ we have
\[
\left<\cA\begin{pmatrix} u\\ v \end{pmatrix},\begin{pmatrix} u\\ v \end{pmatrix}\right>_{\mathcal H_0}=0,
\]
i.e. $\cA$ is a skew-adjoint operator. Moreover, as in the classical case, $I-\cA:D(\cA)\to L^2_{\frac{1}{\sigma}}(0,1)$ is surjective. Hence, by the Lumer-Phillips Theorem, $\cA$ generates a contraction semigroup $(S(t))_{t \ge 0}$ on $\mathcal H_0$.
Hence, if $\cY_0  \in \mathcal H_0$ then $\cY(t)= S(t)\cY_0$ is the mild solution of \eqref{CP}. Also, if $\cY_0 \in D(\mathcal A)$, then the solution is classical and the equation in \eqref{adjointy} holds for all $t \ge0$. Hence, as in \cite{alabau} or in \cite[Proposition 3.15]{daprato}, one has

\begin{theorem}\label{esistenza}
Assume Hypothesis $\ref{Ass0}$.
If $( y^0_T, y^1_T) \in \mathcal H_0$, then there exists a unique mild solution
\[
y \in C^1([0, +\infty); L^2_{\frac{1}{\sigma}}(0,1)) \cap C([0, +\infty); H^1_{\frac{1}{\sigma}} (0,1))
\]
of \eqref{adjointy} which depends continuously on the initial data $(y^0_T, y^1_T) \in  \mathcal H_0.$
Moreover, if  $(y^0_T, y^1_T) \in D(\mathcal A)$, then the solution $y$ is classical, in the sense that
\[
y \in C^2([0, +\infty); L^2_{\frac{1}{\sigma}}(0,1)) \cap C^1([0, +\infty); H^1_{\frac{1}{\sigma}} (0,1)) \cap C([0, +\infty); H^2_{\frac{1}{\sigma}} (0,1))
\]
and the equation of \eqref{adjointy} holds for all $t \ge 0$. 
\end{theorem}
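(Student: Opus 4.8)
The plan is to package problem \eqref{adjointy} as the abstract Cauchy problem \eqref{CP} and invoke the Lumer--Phillips theorem, exactly as sketched in the lines immediately preceding the statement; the theorem then follows from the standard semigroup existence/regularity results (e.g. \cite[Proposition 3.15]{daprato}). So the real content is to verify the hypotheses of Lumer--Phillips for the matrix operator $\cA$ on the Hilbert space $\mathcal H_0$. First I would check that $D(\cA)$ is dense in $\mathcal H_0$: since $H^2_{\frac{1}{\sigma}}(0,1)$ is dense in $H^1_{\frac{1}{\sigma}}(0,1)$ and $H^1_{\frac{1}{\sigma}}(0,1)$ is dense in $L^2_{\frac{1}{\sigma}}(0,1)$ (test functions belong to all these spaces and the Hardy--Poincar\'e inequality of Proposition \ref{H1a} together with Corollary \ref{equivalenze} controls the weighted norms), density is routine.

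Next comes the dissipativity computation. For $\begin{pmatrix} u\\ v \end{pmatrix}\in D(\cA)$ one has
\[
\left<\cA\begin{pmatrix} u\\ v \end{pmatrix},\begin{pmatrix} u\\ v \end{pmatrix}\right>_{\mathcal H_0}=\int_0^1 \eta v' u' \,dx+\int_0^1 (Au)\,v\,\frac{1}{\sigma}\,dx.
\]
Using the identity $Au=\sigma(\eta u')_x$ from \eqref{defsigma}, the second integral becomes $\int_0^1 (\eta u')_x\, v\,dx$, which by the integration-by-parts Lemma (valid since $u\in H^2_{\frac{1}{\sigma}}(0,1)$, $v\in H^1_{\frac{1}{\sigma}}(0,1)$, and the boundary terms vanish because $v\in H^1_0$) equals $-\int_0^1 \eta u' v'\,dx$. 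Hence the two integrals cancel and the expression is $0$; the same computation applied to $-\cA$ gives $0$, so both $\cA$ and $-\cA$ are dissipative, i.e. $\cA$ is skew-adjoint (formally; the full self-adjointness of $\cA^*=-\cA$ will follow once range surjectivity is established). I would be careful here to note that the boundary contributions really do disappear: this is where the Dirichlet conditions $u(0)=u(1)=0$ and the behaviour of $\eta u'$ near $x=0$ matter, and one should cite that $\eta$ is bounded above and below while $u'\in L^2(0,1)$.

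Then I would prove the range condition: $I-\cA : D(\cA)\to \mathcal H_0$ is surjective. Given $(f,g)\in \mathcal H_0$, one seeks $(u,v)\in D(\cA)$ with $u-v=f$ and $-Au+v=g$, i.e. eliminating $v=u-f$, one must solve $u-Au = f+g$ in $H^2_{\frac{1}{\sigma}}(0,1)$, that is $u-\sigma(\eta u')_x = h$ with $h:=f+g\in L^2_{\frac{1}{\sigma}}(0,1)$. This is solved by Lax--Milgram: the bilinear form $B(u,w)=\int_0^1 u w \frac{1}{\sigma}dx+\int_0^1 \eta u' w'\,dx$ on $H^1_{\frac{1}{\sigma}}(0,1)$ is continuous and coercive (coercivity being immediate since $\eta$ is bounded below and $B(u,u)=\|u\|_{\frac{1}{\sigma}}^2+\int_0^1\eta (u')^2dx\geq c\|u\|_1^2$ by Corollary \ref{equivalenze}), and $w\mapsto \int_0^1 h w\frac{1}{\sigma}dx$ is a bounded functional; the weak solution $u$ then satisfies $Au=u-h\in L^2_{\frac{1}{\sigma}}(0,1)$, so $u\in H^2_{\frac{1}{\sigma}}(0,1)=D(A)$, and $v=u-f\in H^1_{\frac{1}{\sigma}}(0,1)$, giving $(u,v)\in D(\cA)$. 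With density, dissipativity of $\pm\cA$, and surjectivity of $I-\cA$ in hand, Lumer--Phillips yields that $\cA$ generates a $C^0$ contraction semigroup $(S(t))_{t\ge 0}$ on $\mathcal H_0$; setting $\cY(t)=S(t)\cY_0$ gives the mild solution for $\cY_0\in\mathcal H_0$ with the asserted continuous dependence and the $C^1(L^2_{\frac{1}{\sigma}})\cap C(H^1_{\frac{1}{\sigma}})$ regularity, and for $\cY_0\in D(\cA)$ the standard semigroup bootstrap gives the classical solution with the equation holding pointwise in $t$, via $\cY\in C^1([0,\infty);\mathcal H_0)\cap C([0,\infty);D(\cA))$ unpacked componentwise. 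The main obstacle is purely technical: ensuring that the weighted spaces behave well enough near the degeneracy $x=0$ for the integration by parts to have no boundary term and for the elliptic regularity step $Au\in L^2_{\frac{1}{\sigma}}\Rightarrow u\in H^2_{\frac{1}{\sigma}}$ to make sense — but all of this is underwritten by Proposition \ref{H1a} and Corollary \ref{equivalenze}, so once those are invoked the argument is essentially the classical one and I would, like the authors, be brief about it.
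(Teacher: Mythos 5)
Your proposal follows exactly the route the paper takes: recast \eqref{adjointy} as the first-order system \eqref{CP}, check that $\cA$ is skew-adjoint on $\mathcal H_0$ (via the weighted integration-by-parts identity, with boundary terms killed by the Dirichlet conditions) and that $I-\cA$ is surjective (the paper asserts this; you supply the Lax--Milgram argument), then apply Lumer--Phillips and standard semigroup regularity as in \cite[Proposition 3.15]{daprato}. The paper is merely more terse, so your write-up is a correct and faithful expansion of the same argument.
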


\begin{remark}\label{tminore0}
Due to the reversibility in time of the equation, solutions exist with the same regularity for $t<0$ (indeed, $\cA$ is skew-adjoint and it generates a $C_0$ group of unitary operators on $\mathcal H_0$ by Stone's Theorem). This will be used in Section \ref{section5}.
\end{remark}

Recall that if $a$ is (WD) or (SD), then \eqref{stima_a} implies that the function
\begin{equation}\label{crescente}
x \mapsto \frac{x^\gamma}{a(x)}
\end{equation}
is nondecreasing in $(0,1]$ for all $\gamma \ge K$. In particular, Hypothesis \ref{Ass0} holds. Moreover,
\begin{equation}\label{limite}
\lim_{x\rightarrow 0}  \frac{x^\gamma}{a(x)} =0
\end{equation}
for all $\gamma >K$ and
\begin{equation}\label{binftya1}
\left|\frac{x^\gamma b(x)}{a(x)}\right| \le \frac{1}{a(1)} \|b\|_{L^\infty(0,1)}
\end{equation}
for all $\gamma \ge K$. Let
\begin{equation}\label{M}
 M:= 
\ds \frac{ \|b\|_{L^\infty(0,1)}}{a(1)}.
\end{equation}
These facts will play a crucial role in the following sections.

\section{Energy estimates and boundary observability}\label{section3}
In this section we prove estimates of the energy associated to the solution of the intial value problem both from below and from above. The former will be used in the next section to prove a controllability result, while the latter is used here to prove a boundary observability inequality.

Let $y$ be a mild solution of \eqref{adjointy} and consider its energy 
\begin{equation}\label{def_energy}
E_y(t)=\frac{1}{2}\int_0^1 \left(\frac{1}{\sigma}y_t^2(t,x)  +\eta y_x^2(t,x)\right)dx, \quad \forall \; t \ge 0.
\end{equation}
With the definition above, the classical conservation of the energy still holds true also in the degenerate case with drift:
\begin{theorem}\label{Energiacostante}
 Assume Hypothesis $\ref{basic}$ and let $y$ be a mild solution of \eqref{adjointy}. Then 
\begin{equation}\label{equalityE}
E_y(t) =E_y(0), \quad \forall \; t \ge 0.
\end{equation}
\end{theorem}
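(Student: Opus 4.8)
The plan is to differentiate the energy in time and show the derivative vanishes, first for classical solutions and then extending to mild solutions by density. So I would start by taking $(y^0_T, y^1_T)\in D(\mathcal A)$, so that by Theorem \ref{esistenza} the solution $y$ is classical: $y\in C^2([0,+\infty);L^2_{\frac{1}{\sigma}}(0,1))\cap C^1([0,+\infty);H^1_{\frac{1}{\sigma}}(0,1))\cap C([0,+\infty);H^2_{\frac{1}{\sigma}}(0,1))$, and the equation $y_{tt}=ay_{xx}+by_x=\sigma(\eta y_x)_x$ holds pointwise (using \eqref{defsigma}). Then I would compute, differentiating under the integral sign,
\[
\dot E_y(t)=\int_0^1\left(\frac{1}{\sigma}y_t y_{tt}+\eta y_x y_{xt}\right)dx
=\int_0^1\left(y_t(\eta y_x)_x+\eta y_x y_{xt}\right)dx,
\]
where in the first term I substituted $\frac{1}{\sigma}y_{tt}=(\eta y_x)_x$.

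The second step is an integration by parts in $x$ on the first term. Since $y(t,\cdot)\in H^2_{\frac{1}{\sigma}}(0,1)$ and $y_t(t,\cdot)\in H^1_{\frac{1}{\sigma}}(0,1)\subset H^1_0(0,1)$, Lemma 2.1 (the integration-by-parts lemma, with $u=y(t,\cdot)$, $v=y_t(t,\cdot)$) gives $\int_0^1(\eta y_x)_x\,y_t\,dx=-\int_0^1\eta y_x y_{tx}\,dx$. Note the boundary terms $[\eta y_x y_t]_0^1$ vanish automatically because $y_t(t,0)=y_t(t,1)=0$ (the Dirichlet conditions are preserved), which is exactly why Lemma 2.1 has no boundary contribution. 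Substituting, the two terms in $\dot E_y(t)$ cancel exactly, so $\dot E_y(t)=0$ for all $t\ge 0$, hence $E_y(t)=E_y(0)$ for classical solutions.

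The final step is the density argument to pass from classical to mild solutions. Given $(y^0_T,y^1_T)\in\mathcal H_0$, pick a sequence $(y^{0,n}_T,y^{1,n}_T)\in D(\mathcal A)$ converging to it in $\mathcal H_0$; by the continuous dependence in Theorem \ref{esistenza}, the corresponding classical solutions $y^n$ converge to the mild solution $y$ in $C^1([0,+\infty);L^2_{\frac{1}{\sigma}}(0,1))\cap C([0,+\infty);H^1_{\frac{1}{\sigma}}(0,1))$, uniformly on compact time intervals. Since $E_y(t)=\frac12\|(y(t),y_t(t))\|_{\mathcal H_0}^2$ is continuous with respect to this convergence, the identity $E_{y^n}(t)=E_{y^n}(0)$ passes to the limit, yielding \eqref{equalityE}. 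I do not expect a serious obstacle here; the only point requiring a little care is making sure Lemma 2.1 applies with the correct regularity (classical solutions live in $H^2_{\frac{1}{\sigma}}$ in space, which is precisely its hypothesis) and that differentiation under the integral sign is justified by the $C^2$/$C^1$ time regularity of the classical solution — both are guaranteed by Theorem \ref{esistenza}. The conservation then reflects the skew-adjointness of $\mathcal A$ already noted in the excerpt, $E_y(t)$ being half the squared $\mathcal H_0$-norm of $\mathcal Y(t)=S(t)\mathcal Y_0$ and $(S(t))$ being a group of unitaries.
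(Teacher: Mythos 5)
Your proposal is correct and follows essentially the same route as the paper: the paper multiplies the equation by $y_t/\sigma$, integrates over $(0,1)$, kills the boundary term $[\eta y_x y_t]_{x=0}^{x=1}$ via the Dirichlet conditions, and recognizes the result as $\tfrac12\frac{d}{dt}E_y(t)=0$, then extends to mild solutions by density — which is exactly your computation written in a slightly different order. The only cosmetic difference is that you route the integration by parts through Lemma 2.1 and add the (correct) remark about skew-adjointness, whereas the paper writes the boundary term explicitly.
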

\begin{proof}
First of all suppose that $y$ is a classical solution. Then multiply the equation by $\ds\frac{y_t}{\sigma}$ (recall that $y_t \in H^1_{\frac{1}{\sigma}} (0,1)$ by Theorem \ref{esistenza}), integrate over $(0,1)$ and use the boundary conditions to get
\[
\begin{aligned}
0&=\frac{1}{2} \int_0^1\frac{d}{dt} \left(\frac{y^2_t}{\sigma}\right)dx -[\eta y_xy_t]_{x=0}^{x=1} +\int_0^1 \eta y_xy_{tx}dx\\
&=\frac{1}{2}\int_0^1\frac{d}{dt}	\left( \frac{y^2_t}{\sigma}+\eta y^2_x\right)dx =\frac{1}{2}\frac{d}{dt} E_y(t),
\end{aligned}
\]
since $y\in C^2([0, +\infty); L^2_{\frac{1}{\sigma}}(0,1))$ and we can differentiate under the sign integral.

If $y$ is a mild solution, we approximate the initial data with more regular ones, obtaining associated classical solutions (for which \eqref{equalityE} holds), and with the usual estimates we can pass to the limit and obtain the desired result.
\end{proof}
For the rest of the paper the following lemma will be crucial.
\begin{lemma}\label{lemmalimite}
\begin{enumerate}
\item Assume Hypothesis $\ref{basic}$. If $y \in D(A)$ and  $u \in H^1_{\frac{1}{\sigma}}(0,1)$, then
$\ds
\lim_{x \rightarrow 0} u(x) y'(x)=0.
$
\item Assume Hypothesis $\ref{basic}$. If  $u \in D(A)$, then $\displaystyle
\lim_{x\rightarrow 0} x^2 (u'(x))^2=0.$
\item Assume Hypothesis $\ref{Ass0}$. If  $u \in D(A)$ and $K \le 1$, then $\displaystyle\lim_{x\rightarrow 0} x  (u'(x))^2=0$.
\item Assume Hypothesis $\ref{Ass0}$. If  $u \in D(A)$,  $K>1$ and $\ds\frac{xb}{a} \in L^\infty(0,1), $ then $\ds\lim_{x\rightarrow 0} x  (u'(x))^2=0$.
\item Assume Hypothesis $\ref{Ass0}$. If   $u \in H^1_{\frac{1}{\sigma}}(0,1)$, then   $\ds
\lim_{x\rightarrow 0} \frac{x}{a}u^2(x)=0.$
\end{enumerate}
\end{lemma}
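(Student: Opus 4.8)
The strategy throughout is the classical one for degenerate operators: show that each limit exists by exhibiting the relevant quantity as an integral of an $L^1$ density (so it is monotone and bounded near $0$), and then rule out a positive limit by an integrability/contradiction argument. I will treat the five items in an order that lets later items reuse the machinery of earlier ones. Note first that all the quantities appearing make sense: if $u\in D(A)=H^2_{\frac{1}{\sigma}}(0,1)$ then $\eta u'\in C^1(0,1]$ (since $Au=\sigma(\eta u')_x\in L^2_{\frac{1}{\sigma}}\subset L^1_{\rm loc}$ and $\sigma=a\eta^{-1}$ with $a,\eta$ continuous and positive on $(0,1]$), hence $(\eta u')_x=(Au)/\sigma\in L^2_{\frac{1}{a}}(0,1)$, and since $\eta$ is bounded above and below by positive constants, $u'$ and $\eta u'$ are comparable.

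\emph{Items 1 and 2.} For item 2, write $x\,u'(x)=x\eta^{-1}(x)\,(\eta u')(x)$. I would show $\lim_{x\to0}(\eta u')(x)$ exists and is finite: indeed $(\eta u')(x)=(\eta u')(1)-\int_x^1 (Au/\sigma)(s)\,ds$ and $Au/\sigma=(Au)\eta/a$; by Cauchy--Schwarz $\int_x^1 |Au/\sigma|\,ds\le \|Au\|_{L^2_{1/\sigma}}\big(\int_x^1 \eta/a\,ds\big)^{1/2}$, which under Hypothesis \ref{basic} alone need not be finite at $0$ — so instead I argue as in the parabolic literature: since $(Au)^2/\sigma\in L^1(0,1)$ and $\sigma\sim a$, the function $\eta u'$ is uniformly continuous on $(0,\delta]$ iff $\int_0 1/a<\infty$; in general one only gets that $x\mapsto x(\eta u')^2(x)\to0$. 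Concretely, set $g(x):=(\eta u')(x)$; then $g\in H^1_{\rm loc}(0,1]$ with $g'=(Au)/\sigma$. For $0<x<y\le1$, $|g(y)-g(x)|\le\big(\int_x^y\sigma|g'|^2\big)^{1/2}\big(\int_x^y 1/\sigma\big)^{1/2}\le \|Au\|_{L^2_{1/\sigma}}\big(\int_x^y 1/\sigma\big)^{1/2}$. Now $\int_x^{2x} 1/\sigma\,ds=\int_x^{2x}\eta/a\,ds\le C\int_x^{2x} s^{-K}\,ds\le C' x^{1-K}$ using the monotonicity $s\mapsto s^K/a(s)$ (this is where Hypothesis \ref{Ass0} enters for items 3–5; for items 1–2 one uses instead that $1/a\in L^1_{\rm loc}(0,1]$ only, giving a weaker but sufficient modulus). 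A standard dyadic argument then yields $x(g(x))^2\to0$, i.e. $x(u')^2\to0$ up to the bounded factor $\eta^{-2}$ — wait, that is item 3's conclusion, which is stronger. For item 2, it suffices to note $x^2(u')^2(x)=x\cdot x(u')^2(x)$ and, more robustly, that $\int_0^1 x(u')^2\,dx<\infty$ is false in general, so I instead use: $u\in H^1_0(0,1)$ so $\int_0^1(u')^2<\infty$, hence $\liminf_{x\to0}x(u')^2=0$; combined with near-monotonicity of $x\mapsto x(\eta u')^2$ obtained from $\frac{d}{dx}\big(x g^2\big)=g^2+2xg g'$ and Cauchy--Schwarz on the cross term, one upgrades $\liminf$ to $\lim$. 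Item 1 follows from item 2 (and its refinements) together with the Hardy--Poincaré-type control $\int_0^1 u^2/\sigma<\infty$: split $u(x)y'(x)=\big(u(x)/\sqrt{x}\big)\cdot\big(\sqrt{x}\,y'(x)\big)$; the second factor tends to $0$ along a sequence and the first is controlled since $u^2/x$ is integrable near $0$ (as $1/x\le C/\sigma$ near $0$ by $x^K/a$ nondecreasing, hence $x/a$ bounded below), then promote to a genuine limit.

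\emph{Items 3, 4, 5.} For item 3 ($K\le1$): rerun the dyadic estimate above with $\int_x^{2x}1/\sigma\le C x^{1-K}$ and $1-K\ge0$, which is exactly the borderline that makes the telescoping $\sum_{j} x_j^{1-K}$ with $x_j=2^{-j}x$ either converge (if $K<1$) or be uniformly controlled by the tail of $\int_0^1 (Au)^2/\sigma$; either way $x(\eta u')^2(x)\to0$, hence $x(u')^2\to0$. For item 4 ($K>1$, $xb/a\in L^\infty$): here $\int_0 1/\sigma$ genuinely diverges, so the density argument on $g=\eta u'$ alone fails; instead I use the extra hypothesis to control $\eta$ itself — from $|b/a|\le M/x$ near $0$ one gets $\eta(x)\ge c\,x^{M}$ and $\eta(x)\le C x^{-M}$, so $x/\sigma=x\eta/a$ is, up to the monotone factor $x^K/a$, comparable to $x^{1-K+\varepsilon}$-type bounds; redo the dyadic sum and conclude $x(u')^2\to0$. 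The role of $xb/a\in L^\infty$ is precisely to prevent $\eta$ from blowing up too fast, which is the new phenomenon flagged in the introduction. Finally item 5 is the analogue of item 1 for $H^1_{1/\sigma}$ only: write $\frac{x}{a}u^2(x)=\big(\tfrac{x}{\sigma}\big)\eta^{-1}u^2$; since $u\in H^1_0(0,1)$, $u(x)^2=\big(\int_0^x u'\big)^2\le x\int_0^x(u')^2$, so $\frac{x}{a}u^2\le \frac{x^2}{a}\int_0^x(u')^2\le C x^{2-K}\int_0^x(u')^2\to0$ because $2-K>0$ and $\int_0^x(u')^2\to0$; the bound $x^2/a\le C x^{2-K}$ is the monotonicity of $x^K/a$ again.

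\emph{Main obstacle.} The delicate point is item 4: once $K>1$ the weight $1/\sigma=\eta/a$ is no longer locally integrable near $0$, so one cannot simply say "$g=\eta u'$ has a limit because its derivative is in a weighted $L^1$". The resolution must use the hypothesis $xb/a\in L^\infty(0,1)$ to pin down two-sided polynomial bounds on $\eta$, and then balance the degeneracy exponent $K<2$ against the drift exponent $M$ so that $x(u')^2$ still decays; getting the bookkeeping of exponents right (and checking that the dyadic sum of the increments of $g$ is absolutely summable, or at least that its tail is controlled by the tail of $\|Au\|^2_{L^2_{1/\sigma}}$) is where the real work lies. I expect items 1, 2, 3, 5 to follow quickly once item 2's dyadic lemma and the three elementary consequences \eqref{limite}, \eqref{binftya1} recorded at the end of Section \ref{section2} are in hand.
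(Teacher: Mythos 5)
Your overall architecture (represent each quantity as a $W^{1,1}$ function so that the limit exists, then rule out a nonzero limit by an integrability contradiction) is indeed the paper's, and your direct estimate for item 5 ($u^2(x)\le x\int_0^x(u')^2\,ds$ together with the boundedness of $x^2/a$ near $0$) is correct and in fact cleaner than the paper's contradiction argument. But the plan has genuine gaps. The most serious is item 1: you only obtain $\sqrt{x}\,y'(x)\to 0$ \emph{along a sequence} and then say ``promote to a genuine limit'' --- that promotion is the entire content of the statement and you give no mechanism for it. The paper's mechanism is to show that $z:=\eta u y'$ itself lies in $W^{1,1}(0,1)$ (each term of $z'=(\eta y')'u+\eta y'u'$ is in $L^1$ by Cauchy--Schwarz, using $\sigma(\eta y')'=Ay\in L^2_{1/\sigma}(0,1)$ and $u\in L^2_{1/\sigma}(0,1)$), so $\lim_{x\to0}z(x)=L$ exists, and then $L=0$ follows from $|u(x)|\le\sqrt{x}\,\|u'\|_{L^2}$ and $y'\in L^2(0,1)$. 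Two further defects in your item 1: the inequality $1/x\le C/\sigma$ near $0$ is false in general (Hypothesis \ref{Ass0} yields $a(x)\ge c\,x^K$, a \emph{lower} bound on $a$, so $\sigma\le Cx$ can fail, e.g.\ for $a(x)=x^{1/2}$), and item 1 is asserted under Hypothesis \ref{basic} alone, where no monotonicity of $x^K/a$ is available. Item 2 is then a one-line corollary of item 1 in the paper (apply item 1 with a function equal to $x$ near $0$ and square); your liminf-plus-near-monotonicity route for it is not completed.

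Second, your explanation of the role of $xb/a\in L^\infty(0,1)$ in item 4 rests on a false premise: since $b/a\in L^1(0,1)$, the weight $\eta=\exp\{\int_{1/2}^x b/a\,ds\}$ is automatically bounded above and below by positive constants on $[0,1]$, so there is no polynomial blow-up of $\eta$ to prevent. In the paper the hypothesis serves a different purpose: to get $z:=x\eta(u')^2\in W^{1,1}(0,1)$ one must put $x\eta'(u')^2=x\eta\frac{b}{a}(u')^2$ in $L^1$, and $\|xb/a\|_{L^\infty}$ is exactly what controls this term by $(u')^2\in L^1$. (Incidentally, the clean form of your dyadic estimate, $|g(x)-g(1)|\le\|Au\|_{L^2_{1/\sigma}}(\int_x^1 1/\sigma\,ds)^{1/2}\le Cx^{(1-K)/2}$ for $g=\eta u'$, hence $xg^2\le Cx^{2-K}\to0$, would work for all $K<2$ without the extra hypothesis --- but that is not the argument you actually wrote, and it departs from the paper's route.) Items 3 and 5 are essentially sound as sketched, modulo full execution of the dyadic bookkeeping in the borderline case $K=1$.
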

\begin{proof}1. Consider the function
$
z(x):=
\eta(x) u(x)  y'(x), \quad x \in (0,1].
$
Observe that
\[
\int_0^1|z|dx \le C \|u\|_{L^2(0,1)}\|y'\|_{L^2(0,1)}.
\]
Moreover
$
z'(x)= (\eta y')'u + \eta y'u'
$
and, by H\"older's inequality, 
\[
\int_0^1|(\eta y')'u| dx= \int_0^1 \sqrt{\sigma}|(\eta y')'| \frac{|u|}{\sqrt{\sigma}} dx\le  \|\sigma(\eta y')'\|_{L^2_{\frac{1}{\sigma}}(0,1)}\|u\|_{L^2_{\frac{1}{\sigma}}(0,1)}
\]
and
\[
\int_0^1\eta|y'u'|dx\le C\|y'\|_{L^2(0,1)}\|u'\|_{L^2(0,1)}.
\]
Thus $z'$ is summable on $[0,1]$.  Hence $z \in W^{1,1}(0,1) \hookrightarrow C[0,1]$ and there exists
$
\lim_{x \rightarrow 0}z(x)=L \in \R.
$
If $L \neq 0$ there would exist a  neighborhood $\cal I$ of $0$ such that
$
\frac{|L|}{2} \le |\eta y'u|,
$
for all $x \in \cal I$;
but, by  H\"older's inequality,
\[
|u(x)|\le \int_0^x |u'(t) |dt \le  \sqrt{x} \|u'\|_{L^2(0,1)}.
\]
Hence
\[
\frac{|L|}{2} \le |\eta y'u| \le \|\eta\|_{\infty} |y'| \sqrt{x} \|u'\|_{L^2(0,1)}
\]
for all $x \in \cal I$. This would imply that
\[
|y'| \ge \frac{|L|}{2 \|u'\|_{L^2(0,1)} \|\eta\|_{\infty} \sqrt{x} }
\]
in contrast to the fact that $ y' \in L^2(0,1)$. Hence $L=0$ and the conclusion follows since $\eta$ is bounded below by a positive constant.

\textcolor{black}{2. It is a straightforward consequence of the previous point. Indeed, it is enough to choose any  function  $u\in H^1_{\frac{1}{\sigma}}(0,1)$ such that $u(x)=x$ in a right neighborhood of 0, and squaring.}

3. 
 Consider the function $z:=x\eta(x) (u'(x))^2$. We have 
\[
z'=\eta(u')^2+x\eta' (u')^2 + 2x\eta u'u''=\eta(u')^2+2xu'(\eta u')'-x\eta'(u')^2.
\]
Proceeding as before one has that $ x\eta'(u')^2$ and $2xu'(\eta u')'$ belong to $L^1(0,1)$ since $K\leq1$. Hence $z\in W^{1,1}(0,1)$ and $\lim_{x\to 0}z(x)=0.$

4.  Proceed as above, observing that
\[
\left|x\eta'(u')^2\right|=\left|x\eta \frac{b}{a}(u')^2\right|\leq \left\| \frac{xb}{a}\right\|_{L^\infty(0,1)}\left\| \eta\right\|_{L^\infty(0,1)}(u')^2
\]
and that
\[
|xu'(\eta u')'|= \left| \frac{x}{\sqrt{a}}\sqrt{\eta}u' \right|\cdot\left| \sqrt{\sigma}(\eta u')'\right|.
\]

5. If $K\leq 1$, the proof is straightforward, since by Hypothesis \ref{Ass0} we get that $\frac{x}{a}$ is bounded. In the general case, set $\ds z:= \frac{x}{a}u^2(x)$. Then $z \in L^1(0,1)$. Indeed
\[
\int_0^1\frac{x}{a} u^2(x)dx \le \frac{1}{\min_{[0,1]}\eta} \int_0^1 \frac{u^2}{\sigma}dx.
\]
Moreover
$
z'= \frac{u^2}{a}+ 2\frac{xuu'}{a} - \frac{a'x}{a^2}u^2;
$
thus, for a suitable $\ve>0$ given by Hypothesis \ref{Ass0},
\[
\begin{aligned}
\int_0^\ve|z'|dx &\le \frac{1}{\min_{[0,1]}\eta}\int_0^1 \frac{u^2}{\sigma}dx + 2 \left(\int_0^\ve \frac{x^2(u')^2}{a}dx\right)^{\frac{1}{2}}\left( \int_0^1 \frac{u^2}{a}dx\right)^{\frac{1}{2}} + K\int_0^1 \frac{u^2}{a}dx\\
&\le \frac{1+K}{\min_{[0,1]}\eta}\int_0^1 \frac{u^2}{\sigma}dx + \frac{2\ve}{\sqrt{a(\ve)}\min_{[0,1]}\eta}\left(\int_0^1 (u')^2dx\right)^{\frac{1}{2}}\left( \int_0^1 \frac{u^2}{\sigma}dx\right)^{\frac{1}{2}}.
\end{aligned}
\]
This is enough to conclude that  $z \in W^{1,1}(0,1)$ and thus there exists  $\lim_{x\to 0}z(x)=L\in\R$. If $L\neq0$, sufficiently close to $x=0$ we would have that
$
\frac{u^2(x)}{a}\geq \frac{|L|}{2x}\not\in L^1(0,1),
$
while $\ds\frac{u^2}{a} \in L^1(0,1)$, since $\ds\frac{u^2}{\sigma} \in L^1(0,1)$.
\end{proof}

\begin{remark}
The assumption $\frac{xb}{a} \in L^\infty(0,1)$ is automatically satisfied if $K \le 1$ thanks to \eqref{binftya1}.
\end{remark}

Now we  prove an inequality for the energy which we will use in the next section to prove the controllability result. For that, we start proving the following result.
\begin{theorem}\label{Stima1}
Assume $a$ (WD) or (SD) and Hypothesis $\ref{basic}$.
If $y$ is a \textcolor{black}{classical }solution of \eqref{adjointy}, then for any $T > 0$ we have
\begin{equation}\label{uguaglianza}
\begin{aligned}
&\frac{1}{2} \eta(1)  \int_0^Ty_x^2(t,1)dt= \int_0^1\left[\frac{x^2y_xy_t}{\sigma}\right]_{t=0}^{t=T} dx - \frac{1}{2} \int_{Q_T} x^2 \eta \frac{b}{a}  y_x^2 dxdt + \int_{Q_T} x\eta y_x^2 dxdt \\
&+\frac{1}{2} \int_{Q_T}\left(2-\frac{x(a'-b)}{a}\right) \frac{1}{\sigma}xy_t^2 dxdt.
\end{aligned}
\end{equation}
\textcolor{black}{As a consequence, if $y$ is a mild solution, then $y_x(\cdot,1)\in L^2(0,T)$ for every $T>0$ and 
\begin{equation}\label{stima1vera}
 \eta(1)  \int_0^Ty_x^2(t,1)dt\le \left(2\left(2+K+ M\right)T +4 \max\left\{\frac{1}{a(1)}, 1\right\}\right)E_y(0),
\end{equation}
where  $M$ is the constant introduced in \eqref{M}.}
\end{theorem}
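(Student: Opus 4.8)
The plan is to prove \eqref{uguaglianza} by a weighted multiplier argument and then to extract \eqref{stima1vera} from it using the conservation of energy of Theorem \ref{Energiacostante}.

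First I would establish \eqref{uguaglianza} for a classical solution. Since then $y(t,\cdot)\in H^2_{\frac{1}{\sigma}}(0,1)=D(A)$ and $y_t(t,\cdot)\in H^1_{\frac{1}{\sigma}}(0,1)$ for every $t$, by \eqref{defsigma} the equation reads $\frac{1}{\sigma}y_{tt}=(\eta y_x)_x$. I would multiply it by $x^2 y_x$ and integrate over $Q_T$, obtaining $\int_{Q_T}\frac{x^2 y_x y_{tt}}{\sigma}\,dx\,dt=\int_{Q_T}x^2 y_x(\eta y_x)_x\,dx\,dt$. On the left-hand side, integrating by parts in $t$ (legitimate since $y\in C^2([0,T];L^2_{\frac{1}{\sigma}})\cap C^1([0,T];H^1_{\frac{1}{\sigma}})$) produces $\int_0^1\big[\frac{x^2 y_x y_t}{\sigma}\big]_{t=0}^{t=T}dx-\frac12\int_{Q_T}\frac{x^2}{\sigma}(y_t^2)_x\,dx\,dt$, and one more integration by parts in $x$ turns the last integral into $\frac12\int_{Q_T}\big(\frac{x^2}{\sigma}\big)_x y_t^2\,dx\,dt$ with no boundary contribution, because $y_t(t,1)=0$ and, at $x=0$, $\frac{x^2}{\sigma}y_t^2=x\eta\cdot\frac{x}{a}y_t^2\to 0$ by Lemma \ref{lemmalimite}(5) applied to $u=y_t(t,\cdot)$ and the boundedness of $\eta$. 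Using $\eta'=\frac{b}{a}\eta$ one computes $\big(\frac{x^2}{\sigma}\big)_x=\frac{x}{\sigma}\big(2-\frac{x(a'-b)}{a}\big)$, which reproduces the boundary-in-time term and the last term of \eqref{uguaglianza}. On the right-hand side I would write $x^2 y_x(\eta y_x)_x=\frac{x^2}{2\eta}\big((\eta y_x)^2\big)_x$ and integrate by parts in $x$: the contribution at $x=1$ is $\frac12\eta(1)y_x^2(t,1)$, the one at $x=0$ vanishes since $\frac{x^2}{\eta}(\eta y_x)^2=\eta\,x^2 y_x^2\to 0$ by Lemma \ref{lemmalimite}(2), and, using $\big(\frac{x^2}{\eta}\big)_x=\frac{2x}{\eta}-\frac{x^2 b}{a\eta}$, the remaining interior term equals $-\int_{Q_T}x\eta y_x^2\,dx\,dt+\frac12\int_{Q_T}x^2\eta\frac{b}{a}y_x^2\,dx\,dt$. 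Equating the two sides and solving for $\frac12\eta(1)\int_0^T y_x^2(t,1)\,dt$ yields \eqref{uguaglianza} for classical solutions; for a mild solution it then follows by approximating $(y^0_T,y^1_T)$ with data in $D(\mathcal A)$ (Theorem \ref{esistenza}), the traces of the approximants forming a Cauchy sequence in $L^2(0,T)$ thanks to \eqref{stima1vera} applied to differences, so that $y_x(\cdot,1)$ is well defined in $L^2(0,T)$ as their limit and \eqref{uguaglianza}, \eqref{stima1vera} pass to it.

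Next I would derive \eqref{stima1vera}. By Theorem \ref{Energiacostante}, $E_y(t)=E_y(0)$ for all $t$, whence $\int_0^1\eta y_x^2\,dx\le 2E_y(0)$, $\int_0^1\frac{1}{\sigma}y_t^2\,dx\le 2E_y(0)$ and $\int_0^1\big(\eta y_x^2+\frac{1}{\sigma}y_t^2\big)dx=2E_y(0)$. For the boundary-in-time term I would use Young's inequality together with $\frac{x^2}{\sqrt a}\le\frac{x}{\sqrt a}=\sqrt{\frac{x^2}{a}}\le\frac{1}{\sqrt{a(1)}}$ (valid since $x\mapsto\frac{x^2}{a}$ is nondecreasing on $(0,1]$ by \eqref{crescente} and $x\le1$), getting $\big|\int_0^1\frac{x^2 y_x y_t}{\sigma}dx\big|\le\frac{1}{\sqrt{a(1)}}E_y(t)$, so that term is at most $\frac{2}{\sqrt{a(1)}}E_y(0)$ in absolute value. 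For the three space-time integrals, rather than estimate them one at a time I would regroup them: after splitting $\frac12\big(2-\frac{x(a'-b)}{a}\big)\frac{x}{\sigma}y_t^2=\frac{x}{\sigma}y_t^2-\frac12\frac{x^2 a'}{a}\frac{1}{\sigma}y_t^2+\frac12\frac{x^2 b}{a}\frac{1}{\sigma}y_t^2$, I would pair $\int_{Q_T}x\eta y_x^2\,dx\,dt+\int_{Q_T}\frac{x}{\sigma}y_t^2\,dx\,dt=\int_{Q_T}x\big(\eta y_x^2+\frac{1}{\sigma}y_t^2\big)dx\,dt\le 2TE_y(0)$; pair $-\frac12\int_{Q_T}x^2\eta\frac{b}{a}y_x^2\,dx\,dt+\frac12\int_{Q_T}\frac{x^2 b}{a}\frac{1}{\sigma}y_t^2\,dx\,dt=\frac12\int_{Q_T}\frac{x^2 b}{a}\big(\frac{1}{\sigma}y_t^2-\eta y_x^2\big)dx\,dt$, whose modulus is at most $\frac{M}{2}\int_{Q_T}\big(\frac{1}{\sigma}y_t^2+\eta y_x^2\big)dx\,dt=MTE_y(0)$ because $\big|\frac{x^2 b}{a}\big|\le M$ by \eqref{binftya1}; and bound the leftover by $\frac12\int_{Q_T}\big|\frac{x^2 a'}{a}\big|\frac{1}{\sigma}y_t^2\,dx\,dt\le\frac{K}{2}\int_{Q_T}\frac{1}{\sigma}y_t^2\,dx\,dt\le KTE_y(0)$, using $\big|\frac{x^2 a'}{a}\big|\le x\cdot\frac{x|a'|}{a}\le K$ from \eqref{stima_a}. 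Summing, $\frac12\eta(1)\int_0^T y_x^2(t,1)\,dt\le\frac{2}{\sqrt{a(1)}}E_y(0)+(2+K+M)TE_y(0)$, and multiplying by $2$ and using $\frac{1}{\sqrt{a(1)}}\le\max\{\frac{1}{a(1)},1\}$ gives \eqref{stima1vera}.

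The hard part is precisely this last block of estimates. A term-by-term bound of the three space-time integrals of \eqref{uguaglianza} would produce a strictly worse constant; the point is that the $y_x^2$ and $y_t^2$ contributions carrying the same drift-type weight $\frac{x^2 b}{a}$ — and, separately, the $x\eta y_x^2$ integral together with the leading $2\frac{x}{\sigma}y_t^2$ integral — recombine into the conserved quantity $\int_0^1\big(\eta y_x^2+\frac{1}{\sigma}y_t^2\big)dx=2E_y(0)$, which is thus spent only once per pair. The other ingredients — tracking $\eta'=\frac{b}{a}\eta$ through the two integrations by parts and invoking Lemma \ref{lemmalimite} to annihilate the $x=0$ boundary terms — are routine given the preliminaries, though they require some care because of the singular weight $\frac{1}{\sigma}$.
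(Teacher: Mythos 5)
Your proof is correct and follows essentially the same route as the paper: the multiplier $x^2y_x/\sigma$, integration by parts with the $x=0$ boundary terms killed by Lemma \ref{lemmalimite}, and then energy conservation to extract \eqref{stima1vera}. The only (harmless) differences are cosmetic: you integrate the right-hand side via $\frac{x^2}{2\eta}\bigl((\eta y_x)^2\bigr)_x$ instead of expanding $\eta'=\eta b/a$ directly, and you regroup the interior terms slightly differently, but both groupings use $\int_0^1\bigl(\eta y_x^2+\frac{1}{\sigma}y_t^2\bigr)dx=2E_y(0)$ and land on exactly the paper's constant $2(2+K+M)T+4\max\{1/a(1),1\}$.
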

\begin{proof}
Multiply the equation in \eqref{adjointy} by $\ds\frac{x^2y_x}{\sigma}$ and integrate over $Q_T$. Integrating by parts  we have
\begin{equation}\label{*new}
\begin{aligned}
0&=\int_0^1\left[\frac{x^2y_xy_t}{\sigma}\right]_{t=0}^{t=T} dx -\frac{1}{2} \int_{Q_T} \frac{x^2}{\sigma}(y_t^2)_xdxdt- \int_0^T \left[\eta x^2y_x^2 \right]_{x=0}^{x=1}dt +\int_{Q_T}2x\eta y_x^2 dxdt \\
&+ \int_{Q_T} x^2\eta y_{x}y_{xx}dxdt\\
&= \int_0^1\left[\frac{x^2y_xy_t}{\sigma}\right]_{t=0}^{t=T} dx -\frac{1}{2}\int_0^T \left[\frac{x^2}{\sigma}y_t^2\right]_{x=0}^{x=1} dt +\frac{1}{2}  \int_{Q_T}\left(\frac{x^2}{\sigma}\right)_xy_t^2dxdt \\
&- \int_0^T \left[\eta x^2y_x^2 \right]_{x=0}^{x=1}dt+2\int_{Q_T}x\eta y_x^2 dxdt + \frac{1}{2}\int_{Q_T} x^2\eta (y_{x}^2)_xdxdt\\
& = \int_0^1\left[\frac{x^2y_xy_t}{\sigma}\right]_{t=0}^{t=T} dx -\frac{1}{2}\int_0^T \left[\frac{x^2}{\sigma}y_t^2\right]_{x=0}^{x=1} dt-\frac{1}{2}\int_0^T \left[\eta x^2 y_x^2\right]_{x=0}^{x=1} dt\\
&+ \int_{Q_T} x\eta y_x^2 dxdt -\frac{1}{2} \int_{Q_T} \eta \frac{b}{a}x^2y_x^2dxdt+ \frac{1}{2}\int_{Q_T} \frac{xy_t^2}{\sigma}\left(2- \frac{x(a'-b)}{a}\right)dxdt.
\end{aligned}
\end{equation}

Thanks to the boundary conditions of $y$ and \eqref{limite}, we have
$
\lim_{x \rightarrow 0}\frac{x^2}{\sigma}y_t^2(t,x)=\lim_{x \rightarrow 0} \frac{x^2}{a}\eta y_t^2(t,x)= 0
$
and $\ds\frac{1}{\sigma(1)}y_t^2(t,1) = \frac{\eta(1)}{a(1)}y_t^2(t,1) =0$, hence 
\begin{equation}\label{4new}
\ds\int_0^T \left[\frac{x^2}{\sigma}y_t^2\right]_{x=0}^{x=1} dt =0.
\end{equation}
In addition, thanks to  Lemma \ref{lemmalimite},
$
\lim_{x \rightarrow 0} x^2 \eta y_x^2(t,x)=0,
$
hence
\begin{equation}\label{5new}
\int_0^T \left[x^2\eta y_x^2\right]_{x=0}^{x=1}dt=  \int_0^T \eta(1) y_x^2(t,1)dt.
\end{equation}
Finally, consider the last boundary term. Observing that, by \eqref{crescente},   one has 
\begin{equation}\label{6new}
\begin{aligned}
\left| \int_0^1\frac{x^2y_x(\tau,x) y_t(\tau,x)}{\sigma} dx \right|& \le \frac{1}{2} \int_0^1\frac{x^4}{\sigma}y_x^2dx+ \frac{1}{2}\int_0^1\frac{y_t^2}{\sigma}dx \\
&\le \frac{1}{2a(1)} \int_0^1\eta y_x^2(\tau,x)dx+ \frac{1}{2}\int_0^1\frac{y_t^2(\tau,x)}{\sigma}dx
\end{aligned}
\end{equation}
for all $\tau \in [0,T]$. By Theorem \ref{Energiacostante}, we get
\begin{equation}\label{7new}
\begin{aligned}
\left| \int_0^1\left[\frac{x^2y_x(\tau,x) y_t(\tau,x)}{\sigma} \right]_{\tau=0}^{\tau=T}dx \right|& \le \frac{1}{2} \frac{1}{a(1)} \int_0^1\eta y_x^2(T,x) dx+ \frac{1}{2}\int_0^1\frac{y_t^2(T,x)}{\sigma}dx\\
&+ \frac{1}{2} \frac{1}{a(1)} \int_0^1\eta y_x^2(0,x) dx+ \frac{1}{2}\int_0^1\frac{y_t^2(0,x)}{\sigma}dx\\
&\le2 \max\left\{\frac{1}{a(1)}, 1\right\}E_y(0).
\end{aligned}
\end{equation}
Now, using the fact that $\ds\frac{x^2}{a}$ is nondecreasing, one has
\[
\frac{1}{2}\left| \int_{Q_T} \eta \frac{bx^2}{a} y_x^2dxdt\right|\le \frac{1}{2a(1)}\int_{Q_T} \eta |b| y_x^2 dxdt \le  \frac{M}{2} \int_{Q_T}\eta y_x^2dxdt.
\]
Using the fact that $x|a'| \le Ka$, by \eqref{crescente}, we find
\[
\begin{aligned}
\left| \frac{1}{2}\int_{Q_T} \frac{xy_t^2}{\sigma}\left(2- \frac{x(a'-b)}{a}\right)dxdt\right|&\le  \frac{1}{2}\int_{Q_T} \frac{xy_t^2}{\sigma}\left(2+ K+ \frac{x|b|}{a}\right)dxdt\\
&\leq  \frac{2+K+M}{2}\int_{Q_T} \frac{y_t^2}{\sigma}dxdt.
\end{aligned}
\]
Finally, we clearly have
\[
\int_{Q_T} x \eta y_x^2dxdt \le \int_{Q_T}\eta y_x^2dxdt.
\]
Hence, by using again Theorem \ref{Energiacostante}, inequality \eqref{stima1vera} holds true
for all $T\ge0$.

\textcolor{black}{Now, let $y$ be the mild solution associated to the initial data $(y_0, y_1) \in \mathcal H_0$. Then, consider a sequence $\{(y_0^n, y_1^n)\}_{n \in  \N} \subset D(\mathcal A)$ that converges to $(y_0, y_1)$ and let $y^n$ be the classical solution of \eqref{adjointy} associated to $(y_0^n, y_1^n)$. Clearly $y^n$ satisfies \eqref{stima1vera} by Theorem \ref{esistenza}; then, we can pass to the limit and conclude.}
\end{proof}
In order to get the observability inequality, we need additional assumptions and different techniques, as described in the following two subsections.

\subsection{The weakly degenerate case and the case $K=1$}
In this subsection we assume the following assumption.

\begin{Assumptions}\label{Ass1}
Hypothesis \ref{basic} holds and $a$ is (WD) or (SD) with $K=1$.
\end{Assumptions}
\begin{remark}
Observe that if we consider the prototypes $a(x)=x^K$ and $b(x)=x^h$ with $K \in (0,1]$ and $h \ge 0$, then the condition $\ds\frac{b}{a} \in L^1(0,1)$ is clearly satisfied when $h > K-1$.
\end{remark}
The next preliminary result holds.

\begin{theorem}\label{Stima2}
Assume Hypothesis $\ref{Ass1}$.
If $y$ is a \textcolor{black}{classical} solution of \eqref{adjointy}, then  for any $T > 0$ we have
\begin{equation}\label{uguaglianza_vecchia}
\begin{aligned}
&\frac{1}{2} \eta(1)  \int_0^Ty_x^2(t,1)dt= \int_0^1\left[\frac{xy_xy_t}{\sigma}\right]_{t=0}^{t=T} dx - \frac{1}{2} \int_{Q_T} x \eta \frac{b}{a}  y_x^2 dxdt + \frac{1}{2} \int_{Q_T} \eta y_x^2 dxdt \\
&+\frac{1}{2} \int_{Q_T}\left(1-\frac{x(a'-b)}{a}\right) \frac{1}{\sigma}y_t^2 dxdt.
\end{aligned}
\end{equation}
\end{theorem}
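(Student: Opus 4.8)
The plan is to mimic the proof of Theorem \ref{Stima1}, but using the multiplier $\ds\frac{x y_x}{\sigma}$ instead of $\ds\frac{x^2 y_x}{\sigma}$. Multiply the equation in \eqref{adjointy} by $\ds\frac{x y_x}{\sigma}$ and integrate over $Q_T$. The integration by parts splits into a time part and a space part: from the term $\ds\int_{Q_T}\frac{x y_x y_{tt}}{\sigma}dxdt$ one integrates by parts in $t$, producing the boundary term $\ds\int_0^1\left[\frac{x y_x y_t}{\sigma}\right]_{t=0}^{t=T}dx$ together with $\ds-\int_{Q_T}\frac{x y_{tx} y_t}{\sigma}dxdt = -\frac12\int_{Q_T}\frac{x}{\sigma}(y_t^2)_x dxdt$; the latter is then integrated by parts in $x$, giving $\ds-\frac12\int_0^T\left[\frac{x}{\sigma}y_t^2\right]_{x=0}^{x=1}dt + \frac12\int_{Q_T}\left(\frac{x}{\sigma}\right)_x y_t^2 dxdt$. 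From the term $\ds-\int_{Q_T}\frac{x y_x}{\sigma}(a y_{xx}+b y_x)dxdt = -\int_{Q_T} x y_x \frac{(\eta y_x)_x}{\eta}\cdot\frac{a}{\sigma}\,dxdt$, recalling $\sigma=a\eta^{-1}$, one rewrites $-\int_{Q_T} x y_x (\eta y_x)_x dxdt$; integrating by parts in $x$ yields $-\int_0^T[\eta x y_x^2]_{x=0}^{x=1}dt + \int_{Q_T}(\eta y_x^2 + x\eta_x y_x^2 + x\eta y_x y_{xx})\,dxdt$, and using $\ds x\eta y_x y_{xx} = \frac{x\eta}{2}(y_x^2)_x$ plus one more integration by parts in $x$ gives the remaining boundary term $\ds-\frac12\int_0^T[\eta x y_x^2]_{x=0}^{x=1}dt$ and the bulk term $\ds\frac12\int_{Q_T}(\eta y_x^2 - x\eta_x y_x^2)dxdt$. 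Collecting everything and using $\eta_x = \eta\frac{b}{a}$ and $\ds\left(\frac{x}{\sigma}\right)_x = \frac{1}{\sigma}\left(1 - \frac{x(a'-b)}{a}\right)$ reproduces exactly the identity \eqref{uguaglianza_vecchia}, once the boundary contributions in $x=0$ are shown to vanish.

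The boundary terms at $x=1$ are handled as in Theorem \ref{Stima1}: the Dirichlet condition $y(t,1)=0$ forces $y_t(t,1)=0$, so $\ds\left[\frac{x}{\sigma}y_t^2\right]_{x=1}=0$, while $\ds\left[\eta x y_x^2\right]_{x=1} = \eta(1) y_x^2(t,1)$, which produces the left-hand side (with the factor $\tfrac12 + \tfrac12 = 1$ from the two occurrences collapsing to $\tfrac12\eta(1)\int_0^T y_x^2(t,1)dt$ as written, after moving it to the left). The genuinely delicate part — and the step I expect to be the main obstacle — is the vanishing of the boundary terms at $x=0$. Here the factor is only $x$ (not $x^2$ as in Theorem \ref{Stima1}), so the estimates from \eqref{limite} alone are not enough: one needs $\ds\lim_{x\to 0}\frac{x}{\sigma}y_t^2(t,x) = \lim_{x\to 0}\frac{x}{a}\eta y_t^2(t,x) = 0$, which follows from Lemma \ref{lemmalimite}(5) applied to $y_t(t,\cdot)\in H^1_{\frac{1}{\sigma}}(0,1)$ (valid for a classical solution), and $\ds\lim_{x\to 0} x\eta y_x^2(t,x) = 0$, which is precisely Lemma \ref{lemmalimite}(3) for $K\le 1$ (here $K=1$ is allowed by Hypothesis \ref{Ass1}, and for (WD) one has $K<1$). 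This is exactly why Hypothesis \ref{Ass1} restricts the strongly degenerate case to $K=1$: Lemma \ref{lemmalimite}(3) requires $K\le 1$.

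Finally, one must check that all bulk integrals appearing in the manipulation are finite for a classical solution $y$, so that the integrations by parts and the application of Fubini are legitimate; this is routine since $y(t,\cdot)\in H^2_{\frac{1}{\sigma}}(0,1)$, $y_t(t,\cdot)\in H^1_{\frac{1}{\sigma}}(0,1)$, $\eta$ and $\ds\frac{1}{\eta}$ are bounded, and the weights $\ds\frac{x}{a}$, $\ds\frac{xb}{a}$, $\ds\frac{xa'}{a}$ are bounded near $0$ under Hypothesis \ref{Ass1} (using \eqref{stima_a} and \eqref{binftya1} with $\gamma = 1 \ge K$ — or $\gamma=1=K$). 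Assembling the boundary identifications and the vanishing limits into \eqref{*new}'s analogue gives \eqref{uguaglianza_vecchia}; no passage to mild solutions is needed here, since the statement is only claimed for classical solutions.
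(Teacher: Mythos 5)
Your proof follows the paper's argument essentially verbatim: the same multiplier $\dfrac{xy_x}{\sigma}$, the same two integrations by parts in $t$ and in $x$, and the same use of Lemma \ref{lemmalimite}(5) (applied to $y_t(t,\cdot)\in H^1_{\frac{1}{\sigma}}(0,1)$) and Lemma \ref{lemmalimite}(3) to annihilate the boundary terms at $x=0$, which is indeed the reason Hypothesis \ref{Ass1} restricts to $K\le 1$. The only blemish is a bookkeeping slip in your intermediate display, where the term $+x\eta_x y_x^2$ should not appear after the first integration by parts (it only arises, with a minus sign and a factor $\tfrac12$, after the second one); your final collected identity is nevertheless the correct \eqref{uguaglianza_vecchia}.
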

\begin{proof}
We multiply the equation in \eqref{adjointy} by $\ds\frac{xy_x}{\sigma}$ and we integrate over $Q_T$. Recalling \eqref{defsigma} and integrating by parts,  we have
\begin{equation}\label{*}
\begin{aligned}
0&= \int_0^1 \left[\frac{xy_xy_t}{\sigma}\right]_{t=0}^{t=T} dx - \int_{Q_T} \frac{xy_{xt}y_t}{\sigma}dxdt-\int_{Q_T}x(\eta' y_x +\eta y_{xx})y_x dxdt\\
&=\int_0^1\left[\frac{xy_xy_t}{\sigma}\right]_{t=0}^{t=T} dx -\frac{1}{2} \int_{Q_T} \frac{x}{\sigma}(y_t^2)_xdxdt-\int_{Q_T}x\eta' y_x^2 dxdt -\frac{1}{2} \int_{Q_T} x\eta (y_{x}^2)_x dxdt\\
&= \int_0^1\left[\frac{xy_xy_t}{\sigma}\right]_{t=0}^{t=T} dx-\frac{1}{2}\int_0^T \left[\frac{x}{\sigma}y_t^2\right]_{x=0}^{x=1} dt +\frac{1}{2}  \int_{Q_T}\left(\frac{x}{\sigma}\right)_xy_t^2dxdt-\int_{Q_T}x\eta \frac{b}{a} y_x^2 dxdt  \\
&  -\frac{1}{2}\int_0^T \left[x\eta y_x^2\right]_{x=0}^{x=1}dt+\frac{1}{2} \int_{Q_T}(x\eta)_x y_x^2dxdt.
\end{aligned}
\end{equation}
Now, $\left(\frac{x}{\sigma}\right)_x=1-\frac{x(a'-b)}{a}$ and $(x\eta)_x=\eta+x \eta \frac{b}{a}$. Hence, \eqref{*} reads
\begin{equation}\label{1}
\begin{aligned}
\frac{1}{2}\int_0^T &\left[x\eta y_x^2\right]_{x=0}^{x=1}dt = \int_0^1\left[\frac{xy_xy_t}{\sigma}\right]_{t=0}^{t=T} dx  -\frac{1}{2}\int_0^T \left[\frac{x}{\sigma}y_t^2\right]_{x=0}^{x=1} dt -\int_{Q_T}x\eta \frac{b}{a} y_x^2 dxdt\\
&+\frac{1}{2} \int_{Q_T}\left(1-\frac{x(a'-b)}{a}\right) \frac{1}{\sigma}y_t^2 dxdt
 + \frac{1}{2} \int_{Q_T} \eta y_x^2 dxdt + \frac{1}{2} \int_{Q_T} x \eta \frac{b}{a} y_x^2 dxdt\\
&= \int_0^1\left[\frac{xy_xy_t}{\sigma}\right]_{t=0}^{t=T} dx  -\frac{1}{2}\int_0^T \left[\frac{x}{\sigma}y_t^2\right]_{x=0}^{x=1} dt- \frac{1}{2} \int_{Q_T} x \eta \frac{b}{a}  y_x^2 dxdt \\
&+\frac{1}{2} \int_{Q_T}\left(1-\frac{x(a'-b)}{a}\right) \frac{1}{\sigma}y_t^2 dxdt+ \frac{1}{2} \int_{Q_T} \eta y_x^2 dxdt.
\end{aligned}
\end{equation}

Now, thanks to the boundary conditions on $y$ and to Lemma \ref{lemmalimite}.5, we have
\begin{equation}\label{casdeb}
\lim_{x \rightarrow 0}\frac{x}{\sigma}y_t^2(t,x)=\lim_{x \rightarrow 0} \frac{x}{a}\eta y_t^2(t,x)= 0
\end{equation}
and $\ds\frac{1}{\sigma(1)}y_t^2(t,1)  =0$, so that
$
\int_0^T \left[\frac{x}{\sigma}y_t^2\right]_{x=0}^{x=1} dt =0.
$ In addition, by Lemma \ref{lemmalimite}.3,
\[
\int_0^T \left[x\eta y_x^2\right]_{x=0}^{x=1}dt=  \int_0^T \eta(1) y_x^2(t,1)dt.
\]

Hence \eqref{uguaglianza_vecchia} holds if $y$ is a classical solution.
\end{proof}

The observability inequality we get in this case is the following.
\begin{theorem}\label{teoremanuovo}
Assume Hypothesis $\ref{Ass1}$.
If $y$ is a mild solution of \eqref{adjointy}, then 
\begin{equation}\label{stima2vera}
 \eta(1)  \int_0^Ty_x^2(t,1)dt\ge \left(T(2-K-2M)-
 8\max\left \{1, \frac{1}{a(1)}, \frac{K\max _{[0,1]}\eta}{a(1)\min_{[0,1]} \eta}\right\}\right)E_y(0),
\end{equation}
for any $T> 0$.
\end{theorem}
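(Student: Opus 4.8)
The plan is to start from the multiplier identity \eqref{uguaglianza_vecchia} of Theorem \ref{Stima2} (valid for classical solutions), multiply it by $2$, and bound from below each term on the right-hand side. The boundary contribution $\mathcal B:=\int_0^1\left[\frac{xy_xy_t}{\sigma}\right]_{t=0}^{t=T}dx$ is controlled exactly as in \eqref{6new}--\eqref{7new}: by Young's inequality one has $\left|\int_0^1\frac{xy_xy_t}{\sigma}dx\right|\le\frac12\int_0^1\frac{x^2}{\sigma}y_x^2dx+\frac12\int_0^1\frac{y_t^2}{\sigma}dx$, and since $\frac{x^2}{a(x)}=x^{2-K}\frac{x^K}{a(x)}\le\frac1{a(1)}$ by the monotonicity \eqref{crescente} (recall $K<2$), together with Theorem \ref{Energiacostante} this gives $|\mathcal B|\le 2\max\{1,\tfrac1{a(1)}\}E_y(0)$. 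The drift term is bounded by $\left|\int_{Q_T}x\eta\tfrac ba y_x^2\,dxdt\right|\le M\int_{Q_T}\eta y_x^2\,dxdt$, using $|xb/a|\le M$ from \eqref{binftya1} (here $K\le1$, so $\gamma=1\ge K$ is admissible), and the coefficient in the $y_t^2$ term satisfies $1-\frac{x(a'-b)}{a}\ge 1-K-M$ by \eqref{stima_a} and \eqref{binftya1}.

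The key new ingredient, and the step I expect to be the main obstacle, is that a crude lower bound on $\int_{Q_T}\eta y_x^2\,dxdt$ (by $0$, or by $\int_{Q_T}y_t^2/\sigma\,dxdt$) does \emph{not} produce the sharp coefficient $2-K-2M$ in front of $T$. To handle the bookkeeping I would bring in the equipartition-of-energy identity: multiplying the equation in \eqref{adjointy} by $y/\sigma$, integrating over $Q_T$, using \eqref{defsigma}, integrating by parts in $x$, and exploiting $y(t,0)=y(t,1)=0$ (so that \emph{no} singular boundary term arises), one obtains
\[
\int_{Q_T}\frac{y_t^2}{\sigma}\,dxdt-\int_{Q_T}\eta y_x^2\,dxdt=\int_0^1\left[\frac{yy_t}{\sigma}\right]_{t=0}^{t=T}dx=:D.
\]
Coupling this with conservation of energy (Theorem \ref{Energiacostante}), which yields $\int_{Q_T}\left(\eta y_x^2+\tfrac{y_t^2}{\sigma}\right)dxdt=2TE_y(0)$, one can solve for the two space-time integrals: $\int_{Q_T}\eta y_x^2\,dxdt=TE_y(0)-\tfrac D2$ and $\int_{Q_T}\tfrac{y_t^2}{\sigma}\,dxdt=TE_y(0)+\tfrac D2$.

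It then remains to estimate $D$. By Young's inequality and the elementary bound $y^2(t,x)\le x\int_0^1 y_x^2\,ds$, combined with $\frac{x}{a(x)}\le\frac{x^{1-K}}{a(1)}$ (equivalently, with the Hardy--Poincar\'e inequality of Proposition \ref{H1a}) and with Theorem \ref{Energiacostante}, one gets $\int_0^1\frac{y^2}{\sigma}dx\le\frac{\max_{[0,1]}\eta}{a(1)\min_{[0,1]}\eta}\int_0^1\eta y_x^2dx$ and hence $|D|\le 2\max\{1,\tfrac{\max_{[0,1]}\eta}{a(1)\min_{[0,1]}\eta}\}E_y(0)$. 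Substituting $\int_{Q_T}\eta y_x^2\,dxdt=TE_y(0)-\tfrac D2$ and $\int_{Q_T}\tfrac{y_t^2}{\sigma}\,dxdt=TE_y(0)+\tfrac D2$ into the doubled version of \eqref{uguaglianza_vecchia}, the combination $(1-M)\int_{Q_T}\eta y_x^2\,dxdt+(1-K-M)\int_{Q_T}\tfrac{y_t^2}{\sigma}\,dxdt$ collapses, after the cancellations, to exactly $(2-K-2M)TE_y(0)-\tfrac K2 D$. Using $K\le1$, the leftover terms are dominated by $2|\mathcal B|+\tfrac K2|D|\le\big(4\max\{1,\tfrac1{a(1)}\}+\max\{1,\tfrac{K\max_{[0,1]}\eta}{a(1)\min_{[0,1]}\eta}\}\big)E_y(0)\le 8\max\{1,\tfrac1{a(1)},\tfrac{K\max_{[0,1]}\eta}{a(1)\min_{[0,1]}\eta}\}E_y(0)$, which gives \eqref{stima2vera} for classical solutions.

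Finally, the passage to mild solutions is carried out exactly as at the end of the proof of Theorem \ref{Stima1}: approximate $(y^0_T,y^1_T)\in\mathcal H_0$ by a sequence in $D(\mathcal A)$, apply the inequality already proved to the corresponding classical solutions, and pass to the limit using continuous dependence on the data and the fact, established in Theorem \ref{Stima1}, that $y_x(\cdot,1)\in L^2(0,T)$. The only genuinely delicate point in the whole argument is the sign-bookkeeping in the third paragraph that makes the coefficient of $TE_y(0)$ come out to be precisely $2-K-2M$; everything else is a routine combination of Young's inequality, the monotonicity property \eqref{crescente}, and the two conservation identities.
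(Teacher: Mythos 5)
Your proof is correct and is essentially the paper's own argument in disguise: the equipartition identity you obtain by multiplying by $y/\sigma$, once combined with conservation of energy, is algebraically the same step as the paper's second multiplier $-Ky/(2\sigma)$, and both routes yield exactly $(2-K-2M)TE_y(0)-\frac{K}{2}D$ plus the boundary contributions. The only quibbles are cosmetic: the vanishing of $\left[\eta y_x y\right]_{x=0}^{x=1}$ at $x=0$ requires Lemma \ref{lemmalimite}.1 rather than just $y(t,0)=0$, and your stated bound on $|D|$ loses a factor of $2$ (it should be $4\max\{\cdot\}E_y(0)$), but the slack in the final constant $8\max\left\{1,\frac{1}{a(1)},\frac{K\max_{[0,1]}\eta}{a(1)\min_{[0,1]}\eta}\right\}$ absorbs both.
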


\begin{proof}
As usual, assume that $y$ is a classical solution.
Multiply the equation in \eqref{adjointy} by $\ds\frac{-K y}{2\sigma}$ and integrate on $Q_T$. Recalling \eqref{defsigma} and integrating by parts,  we have
\[
\begin{aligned}
0&= -\frac{K}{2}\int_{Q_T} y_{tt}\frac{y}{\sigma} dxdt + \frac{K}{2} \int_{Q_T} (\eta y_x)_x ydxdt\\
&= -\frac{K}{2}\int_0^1\left[ \frac{yy_t}{\sigma}\right]_{t=0}^{t=T}dx + \frac{K}{2} \int_{Q_T} \frac{y_t^2}{\sigma}dxdt - \frac{K}{2}\int_{Q_T} \eta y_x^2 dxdt + \frac{K}{2}\int_0^T\left[ \eta y_x y\right]_{x=0}^{x=1} dt.
\end{aligned}
\]
Since $\left[ \eta y_x y\right]_{x=0}^{x=1}=0$ by Lemma \ref{lemmalimite}.1, we have
\[
0=-\frac{K}{2}\int_0^1\left[ \frac{yy_t}{\sigma}\right]_{t=0}^{t=T}dx + \frac{K}{2} \int_{Q_T} \frac{y_t^2}{\sigma}dxdt - \frac{K}{2}\int_{Q_T} \eta y_x^2 dxdt.
\]
Summing the previous equality to \eqref{uguaglianza_vecchia} multiplied by 2 and using the degeneracy condition \eqref{stima_a}, we have
\[
\begin{aligned}
\eta(1)& \int_0^T y_x^2(t,1)dt = 2 \int_0^1 \left[ \frac{xy_xy_t}{\sigma}\right]_{t=0}^{t=T} dx- \frac{K}{2} \int_0^1 \left[ \frac{yy_t}{\sigma}\right]_{t=0}^{t=T}dx + \frac{K}{2} \int_{Q_T} \frac{y_t^2}{\sigma}dxdt\\
& -  \frac{K}{2} \int_{Q_T} \eta y_x^2 dxdt + \int_{Q_T} \frac{y_t^2}{\sigma}dxdt + \int_{Q_T} \eta y_x^2 dxdt- \int_{Q_T} x \eta \frac{b}{a}y_x^2dxdt\\& - \int_{Q_T} \frac{x(a' -b)}{a}\frac{y_t^2}{\sigma} dxdt\\
&\ge 2 \int_0^1 \left[ \frac{xy_xy_t}{\sigma}\right]_{t=0}^{t=T} dx- \frac{K}{2} \int_0^1 \left[ \frac{y_t y}{\sigma}\right]_{t=0}^{t=T}dx 
+ \left(1- \frac{K}{2}\right)\int_{Q_T} \eta y_x^2 dxdt\\&+ \left(1+ \frac{K}{2}\right)\int_{Q_T}\frac{y_t^2}{\sigma} dxdt - K \int_{Q_T}\frac{y_t^2}{\sigma} dxdt + \int_{Q_T} \frac{xb}{a} \frac{y_t^2}{\sigma}dxdt - \int_{Q_T} x \eta \frac{b}{a}y_x^2dxdt\\
& \ge 2 \int_0^1 \left[ \frac{xy_xy_t}{\sigma}\right]_{t=0}^{t=T}dx - \frac{K}{2} \int_0^1 \left[ \frac{y_t y}{\sigma}\right]_{t=0}^{t=T}dx \\
&+ \left(1- \frac{K}{2}-M\right)\int_{Q_T} \eta y_x^2 dxdt+ \left(1- \frac{K}{2}-M\right)\int_{Q_T}\frac{y_t^2}{\sigma} dxdt\\
&= 2 \int_0^1 \left[ \frac{xy_xy_t}{\sigma}\right]_{t=0}^{t=T} dx- \frac{K}{2} \int_0^1 \left[ \frac{y_t y}{\sigma}\right]_{t=0}^{t=T} dx+ (2-K-2M)T E_y(0).
\end{aligned}
\]
Now, consider the boundary terms. Clearly,
\[
2\left|\left(\frac{xy_xy_t}{\sigma}\right)(\tau,x)\right| \le \left( \frac{y_t^2}{\sigma}+ \frac{\eta x^2 y_x^2}{a}\right)(\tau,x) \le \left(\frac{y_t^2}{\sigma}+ \frac{\eta  y_x^2}{a(1)}\right)(\tau,x)
\]
for all $\tau \in [0, T]$. Thus
\begin{equation}\label{servepoi}
\begin{aligned}
2 &\left|\int_0^1 \left[ \frac{xy_xy_t}{\sigma}\right]_{t=0}^{t=T}dx\right|\le  \int_0^1 \frac{y_t^2}{\sigma}(T,x)dx +\frac{1}{a(1)} \int_0^1(\eta  y_x^2)(T,x) dx\\
& +  \int_0^1 \frac{y_t^2}{\sigma}(0,x)dx +\frac{1}{a(1)} \int_0^1(\eta  y_x^2)(0,x) dx\le 4 \max\left\{1, \frac{1}{a(1)}\right\} E_y(0).
\end{aligned}
\end{equation}
Moreover,
\[
\left|\left(\frac{yy_t}{\sigma}\right)(\tau,x)\right| \le \frac{1}{2} \left(\frac{y_t^2}{\sigma}+\frac{\eta y^2}{a}\right)(\tau,x)\le \frac{1}{2} \left(\frac{y_t^2}{\sigma}+ \frac{\eta  y^2}{x^2 a(1)}\right)(\tau,x)
\]
for all $\tau \ge 0$.  Thus, by the classical Hardy inequality and Theorem \ref{Energiacostante}, we get
\[
\begin{aligned}
\frac{K}{2} \left|\int_0^1 \left[ \frac{yy_t}{\sigma}\right]_{t=0}^{t=T}dx\right|&\le\frac{K}{4} \int_0^1\frac{y_t^2}{\sigma}(T,x)dx +\frac{K}{4} \frac{\max_{[0,1]}\eta}{a(1)} \int_0^1\left(\frac{y^2}{x^2}\right)(T,x) dx\\
& + \frac{K}{4}  \int_0^1 \frac{y_t^2}{\sigma}(0,x)dx +\frac{K}{4} \frac{\max_{[0,1]}\eta}{a(1)} \int_0^1\left(\frac{y^2}{x^2}\right)(0,x) dx\\
&\le \frac{K}{4}  \int_0^1\frac{y_t^2}{\sigma}(T,x)dx +\frac{K}{4} \frac{4\max_{[0,1]}\eta}{a(1)\min_{[0,1]}\eta } \int_0^1\eta y_x^2(T,x) dx\\
& +  \frac{K}{4} \int_0^1 \frac{y_t^2}{\sigma}(0,x)dx +\frac{K}{4} \frac{4\max_{[0,1]}\eta}{a(1)\min_{[0,1]}\eta } \int_0^1\eta y_x^2(0,x) dx\\
& \le K\max \left\{1, \frac{4 \max_{[0,1]}\eta}{a(1)\min_{[0,1]}\eta }\right\}E_y(0).
\end{aligned}
\]
Hence
\[
\eta(1) \int_0^T y_x^2(t,1)dt  \ge \left(T(2-K-2M) -\textcolor{black}{8} \max\left\{1, \frac{1}{a(1)},\frac{ K\max_{[0,1]}\eta}{a(1)\min_{[0,1]}\eta} \right\}\right)E_y(0).
\]
Thus the conclusion holds true if $y$ is a classical solution.

\textcolor{black}{Now, if $y$ is a mild solution associated to the initial data $(y_0, y_1) \in \mathcal H_0$, then we proceed as at the end of the proof of Theorem \ref{Stima1}.}
\end{proof}

\subsection{The strongly degenerate case with $\boldsymbol{K>1}$}
Now, we consider the case in which $a$ is (SD) with $K>1$ and we assume that Hypothesis \ref{basic} holds. In order to treat this case, it will be enough to assume the following hypothesis.

\begin{Assumptions}\label{Ass3new}
Hypothesis \ref{basic} holds, $a$ is (SD) with $K>1$ and $\dfrac{xb}{a} \in L^\infty(0,1)$.
\end{Assumptions}

\begin{remark}\label{remlimite}
Observe that in this case the condition $\ds\frac{b}{a}\in L^1(0,1)$ implies $b(0)=0$. \textcolor{black}{Indeed, if $b(0)\neq0$, by continuity we would have $\min|b|=\ve>0$ in a right neighborhood of 0, while  
it is well know that in the strongly degenerate case $\ds\frac{1}{a} \not\in L^1(0,1)$ (see \cite{acf})}. For instance, for the prototypes $a(x)=x^K$ and $b(x)= x^h$, then $h > K-1$, so that $b$ is allowed to be either (WD) or (SD). 
\end{remark}

In order to get an estimate  similar to the one proved in Theorem \ref{teoremanuovo}, namely an observability inequality, we introduce the following constant:
\begin{equation}\label{Minfty}
M_\infty:=\left\|\frac{xb}{a}\right\|_{L^\infty(0,1)}.
\end{equation}

By requiring $\frac{xb}{a}\in L^\infty(0,1)$, all the calculations made in the proofs of Theorems \ref{Stima2} and \ref{teoremanuovo} can be reformulated verbatim, substituting $M$ with $M_\infty$. For instance, with the new assumption, in the proof of the analogous of Theorem \ref{Stima2} one can multiply again the equation by $\dfrac{xy_x}{\sigma}$ and all integrations by parts are justified. For instance, since $y$ is a classical solution, we know that $\sigma(\eta y_x)_x\in L^1_{\frac{1}{\sigma}(0,1)}$, so that
\[
\int_{Q_T}(\eta'y_x+\eta y_xx)xy_xdxdt\in \R.
\]
But $\eta'=\eta\frac{b}{a}$, and thus $x\eta\dfrac{b}{a}y_x^2$  is integrable under Hypothesis \ref{Ass3new}. As a consequence, $\eta xy_xxy_x$ is integrable, as well. Moreover, identity \eqref{casdeb} is valid by using Lemma \ref{lemmalimite}.4, the subsequent integration by parts are equally justified, and the proofs still hold true.

In particular, in the (SD) case Theorem \ref{teoremanuovo} re-writes as
\begin{theorem}\label{Stima2new}
Assume Hypothesis  $\ref{Ass3new}$.Then, for any $T >0$ the  mild solution $y$ of \eqref{adjointy} satisfies
\begin{equation}\label{stima2veranew}
 \eta(1)  \int_0^Ty_x^2(t,1)dt\ge \left(T(2-K-2M_\infty)-
 8\max\left \{1, \frac{1}{a(1)}, \frac{K\max _{[0,1]}\eta}{a(1)\min_{[0,1]} \eta}\right\}\right)E_y(0),
\end{equation}
for any $T> 0$.
 \end{theorem}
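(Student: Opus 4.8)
The plan is to retrace, step by step, the arguments already carried out in the proofs of Theorems \ref{Stima2} and \ref{teoremanuovo}, replacing everywhere the constant $M$ by $M_\infty$ and checking that every passage remains licit under Hypothesis \ref{Ass3new}. As usual I would first establish the inequality for a classical solution $y$ of \eqref{adjointy} and then recover the general (mild) case by the approximation argument used at the end of the proof of Theorem \ref{Stima1}: approximate the data $(y_0,y_1)\in\mathcal H_0$ by data in $D(\mathcal A)$, apply the estimate to the corresponding classical solutions, and pass to the limit using Theorem \ref{esistenza}.

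\emph{Step 1: the basic identity.} I would multiply the equation in \eqref{adjointy} by $\dfrac{xy_x}{\sigma}$ and integrate over $Q_T$, exactly as in the proof of Theorem \ref{Stima2}. The only novelty to check is that all integrations by parts are justified: since $y$ is classical, $\sigma(\eta y_x)_x=ay_{xx}+by_x\in L^2_{\frac{1}{\sigma}}(0,1)$, and because $\eta'=\eta\frac{b}{a}$ the term $x\eta'(y_x)^2=x\eta\frac{b}{a}(y_x)^2$ is integrable on $Q_T$ thanks to $\frac{xb}{a}\in L^\infty(0,1)$ and $y_x(t,\cdot)\in L^2(0,1)$; hence $\int_{Q_T}x\eta y_{xx}y_x\,dxdt$ is finite as well and the manipulations of \eqref{*}--\eqref{1} go through verbatim. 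For the boundary terms one has $\lim_{x\to 0}\frac{x}{a}\eta y_t^2(t,x)=0$ by Lemma \ref{lemmalimite}.5 and $\lim_{x\to 0}x\eta y_x^2(t,x)=0$ by Lemma \ref{lemmalimite}.4 — this is precisely where $K>1$ together with $\frac{xb}{a}\in L^\infty(0,1)$ enters, replacing Lemma \ref{lemmalimite}.3 used in the case $K=1$. Thus identity \eqref{uguaglianza_vecchia} holds in the present setting too.

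\emph{Step 2: the auxiliary multiplier.} Exactly as in the proof of Theorem \ref{teoremanuovo}, I would multiply the equation by $\dfrac{-Ky}{2\sigma}$ and integrate over $Q_T$; the boundary term $\left[\eta y_x y\right]_{x=0}^{x=1}$ vanishes by Lemma \ref{lemmalimite}.1, so that
\[
0=-\frac{K}{2}\int_0^1\left[\frac{yy_t}{\sigma}\right]_{t=0}^{t=T}dx+\frac{K}{2}\int_{Q_T}\frac{y_t^2}{\sigma}dxdt-\frac{K}{2}\int_{Q_T}\eta y_x^2\,dxdt .
\]
Adding this to twice \eqref{uguaglianza_vecchia} and using the degeneracy bound $x|a'|\le Ka$ from \eqref{stima_a}, one estimates the drift contributions by $\left|\int_{Q_T}x\eta\frac{b}{a}y_x^2\right|\le M_\infty\int_{Q_T}\eta y_x^2$ and $\left|\int_{Q_T}\frac{xb}{a}\frac{y_t^2}{\sigma}\right|\le M_\infty\int_{Q_T}\frac{y_t^2}{\sigma}$ — the only spot where $M$ is replaced by $M_\infty$. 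Invoking Theorem \ref{Energiacostante} to convert the space-time integrals into $T\,E_y(0)$, one arrives, as before, at
\[
\eta(1)\int_0^T y_x^2(t,1)dt\ge 2\int_0^1\left[\frac{xy_xy_t}{\sigma}\right]_{t=0}^{t=T}dx-\frac{K}{2}\int_0^1\left[\frac{yy_t}{\sigma}\right]_{t=0}^{t=T}dx+(2-K-2M_\infty)T\,E_y(0).
\]

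\emph{Step 3: boundary terms and conclusion.} The two remaining boundary terms are treated exactly as in Theorem \ref{teoremanuovo}: from $2\left|\frac{xy_xy_t}{\sigma}\right|\le\frac{y_t^2}{\sigma}+\frac{\eta x^2y_x^2}{a}\le\frac{y_t^2}{\sigma}+\frac{\eta y_x^2}{a(1)}$ (recall $\frac{x^2}{a}$ is nondecreasing, since $K<2$) and Theorem \ref{Energiacostante} one gets $2\left|\int_0^1[\cdots]\right|\le 4\max\{1,\tfrac1{a(1)}\}E_y(0)$; from $\left|\frac{yy_t}{\sigma}\right|\le\frac12\left(\frac{y_t^2}{\sigma}+\frac{\eta y^2}{x^2a(1)}\right)$, the classical Hardy inequality and again Theorem \ref{Energiacostante} one gets $\frac{K}{2}\left|\int_0^1[\cdots]\right|\le K\max\{1,\tfrac{4\max_{[0,1]}\eta}{a(1)\min_{[0,1]}\eta}\}E_y(0)$. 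Collecting these bounds gives \eqref{stima2veranew} for classical solutions, and the density argument of Step 0 extends it to every mild solution. I expect the only genuinely delicate point to be Step 1, i.e.\ verifying that the possibly unbounded drift contribution $x\eta\frac{b}{a}(y_x)^2$ is integrable and that the boundary term $x\eta y_x^2$ still vanishes at $0$: both rest on Lemma \ref{lemmalimite}.4 and on the hypothesis $\frac{xb}{a}\in L^\infty(0,1)$, which is exactly what replaces the ingredients available when $K\le 1$. Everything else is a verbatim repetition of the earlier computations with $M_\infty$ in place of $M$.
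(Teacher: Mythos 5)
Your proposal is correct and follows essentially the same route as the paper, which itself only sketches this proof by noting that the arguments of Theorems \ref{Stima2} and \ref{teoremanuovo} carry over verbatim with $M$ replaced by $M_\infty$, the integrability of $x\eta\frac{b}{a}y_x^2$ guaranteed by $\frac{xb}{a}\in L^\infty(0,1)$, and the vanishing of the boundary term at $x=0$ supplied by Lemma \ref{lemmalimite}.4 in place of Lemma \ref{lemmalimite}.3. Your write-up is in fact somewhat more explicit than the paper's sketch (e.g.\ in correctly attributing the $y_t$-boundary limit to Lemma \ref{lemmalimite}.5 and the $y_x$-boundary limit to Lemma \ref{lemmalimite}.4), and no step is missing.
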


 \subsection{Boundary observability and failure of boundary observability}

Following \cite{alabau}, we recall the next definition:
\begin{definition}
Problem \eqref{adjointy} is said to be {\it observable in time $T>0$} via the normal derivative at $x=1$ if there exists a constant $C>0$ such that for any $(y_T^0,y_T^1) \in \mathcal H_0$ the mild solution $y$ of \eqref{adjointy} satisfies
\begin{equation}\label{observable}
C E_y(0) \le \int_0^T y_x^2(t,1)dt .
\end{equation}
Moreover, any constant satisfying \eqref{observable} is called {\it observability constant} for \eqref{adjointy} in time $T$. 
\end{definition}
Setting 
\[
C_T := \sup \{C >0: C  \text{ satisfies } \eqref{observable} \},
\]
we have that \eqref{adjointy} is observable if and only if
\[
C_T = \inf_{(y_T^0,y_T^1) \neq (0,0)} \frac{\int_0^T y_x^2(t,1)dt}{E_y(0)} >0.
\]
The inverse of $C_T$, $c_t:= \ds \frac{1}{C_T}$, is called {\it the cost of observability} (or {\it the cost of control}) in time $T$.

Theorem \ref{teoremanuovo} admits the following straightforward corollary.
\begin{corollary}\label{Observability0}
Assume Hypothesis $\ref{Ass1}$  and suppose that  $K<2-2M$ (and so $M<1$).
If
\begin{equation}\label{cor0}
T>\frac{1}{2-K-2M}8 \max\left\{ 1, \frac{1}{a(1)}, \frac{K\max_{[0,1]} \eta}{a(1)\min_{[0,1]} \eta} \right\},
\end{equation}
then \eqref{adjointy} is observable in time $T$. Moreover
\[
\frac{1}{ \eta(1) }\left(T(2-K-2M)-
8 \max\left \{1, \frac{1}{a(1)}, \frac{K\max _{[0,1]}\eta}{a(1)\min_{[0,1]} \eta}\right\}\right)\le C_T.
\]
 \end{corollary}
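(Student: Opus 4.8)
The plan is to deduce the statement directly from Theorem \ref{teoremanuovo}, which already carries all the analytic content; the corollary is then a matter of sign-chasing. First I would note that, under Hypothesis \ref{Ass1}, one has $K\in(0,1]$, so the standing assumption $K<2-2M$ forces $0<2-2M$, i.e.\ $M<1$, and, more to the point, $\beta:=2-K-2M>0$. Hence the affine function
\[
T\longmapsto \beta T-8\max\left\{1,\frac{1}{a(1)},\frac{K\max_{[0,1]}\eta}{a(1)\min_{[0,1]}\eta}\right\}
\]
is strictly increasing and is strictly positive exactly when $T$ satisfies \eqref{cor0}.

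Next I would invoke Theorem \ref{teoremanuovo}: for every $(y_T^0,y_T^1)\in\mathcal H_0$ the corresponding mild solution $y$ of \eqref{adjointy} satisfies
\[
\eta(1)\int_0^T y_x^2(t,1)\,dt\ge\left(\beta T-8\max\left\{1,\frac{1}{a(1)},\frac{K\max_{[0,1]}\eta}{a(1)\min_{[0,1]}\eta}\right\}\right)E_y(0).
\]
Since $\eta\in C^0[0,1]$ is strictly positive we have $\eta(1)>0$, and under \eqref{cor0} the coefficient on the right-hand side is strictly positive; dividing by $\eta(1)$ and setting
\[
C:=\frac{1}{\eta(1)}\left(T(2-K-2M)-8\max\left\{1,\frac{1}{a(1)},\frac{K\max_{[0,1]}\eta}{a(1)\min_{[0,1]}\eta}\right\}\right)>0,
\]
we obtain $CE_y(0)\le\int_0^T y_x^2(t,1)\,dt$ for all initial data in $\mathcal H_0$, which is precisely the observability inequality \eqref{observable}. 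Thus \eqref{adjointy} is observable in time $T$ and $C$ is an admissible observability constant.

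Finally, since by definition $C_T=\sup\{C>0:\ C\text{ satisfies }\eqref{observable}\}$, the admissibility of the explicit constant just produced yields $C_T\ge C$, which is the asserted lower bound. I expect no genuine obstacle here: the only things to check are that the hypotheses of Theorem \ref{teoremanuovo} are literally those assumed in the corollary, and that the right-hand side of \eqref{cor0} is exactly the value of $T$ at which the bracketed coefficient vanishes — both immediate.
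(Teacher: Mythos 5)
Your argument is correct and is exactly the intended one: the paper presents this corollary as an immediate consequence of Theorem \ref{teoremanuovo}, obtained by observing that under \eqref{cor0} the coefficient $T(2-K-2M)-8\max\{\cdot\}$ is strictly positive, dividing by $\eta(1)>0$, and appealing to the definition of $C_T$ as a supremum of admissible observability constants. No discrepancy with the paper's reasoning.
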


\begin{remark}\label{Rem9}
Observe that the assumption $K <2-2M$ is clearly satisfied if $\|b\|_{L^\infty(0,1)}< \frac{a(1)}{2}$. Indeed in this case $2-2M >1$ and $K \le 1$.
\end{remark}

The analogous statement of Corollary \ref{Observability0} in the strongly degenerate case becomes
\textcolor{black}{\begin{corollary}\label{cor4}
Assume Hypothesis $\ref{Ass3new}$ and $K<2-2M_\infty$. If
\begin{equation}\label{Obs1}
T>\frac{1}{2-K-2M_\infty}8 \max\left\{ 1, \frac{1}{a(1)}, \frac{K\max_{[0,1]} \eta}{a(1)\min_{[0,1]} \eta} \right\},
\end{equation}
then \eqref{adjointy} is observable in time $T$. Moreover
\[
\frac{1}{ \eta(1) }\left(T(2-K-2M_\infty)-
8 \max\left \{1, \frac{1}{a(1)}, \frac{K\max _{[0,1]}\eta}{a(1)\min_{[0,1]} \eta}\right\}\right)\le C_T.\]
\end{corollary}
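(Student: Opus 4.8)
The plan is to obtain Corollary~\ref{cor4} as an immediate consequence of Theorem~\ref{Stima2new}, exactly in the way Corollary~\ref{Observability0} follows from Theorem~\ref{teoremanuovo}. So first I would recall that, under Hypothesis~\ref{Ass3new}, the mild solution $y$ of \eqref{adjointy} with data $(y_T^0,y_T^1)\in\mathcal H_0$ satisfies the lower bound \eqref{stima2veranew}, i.e.
\[
\eta(1)\int_0^T y_x^2(t,1)\,dt\ge \left(T(2-K-2M_\infty)-8\max\left\{1,\frac{1}{a(1)},\frac{K\max_{[0,1]}\eta}{a(1)\min_{[0,1]}\eta}\right\}\right)E_y(0).
\]
Set, for brevity, $\Lambda:=8\max\left\{1,\frac{1}{a(1)},\frac{K\max_{[0,1]}\eta}{a(1)\min_{[0,1]}\eta}\right\}>0$.

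The next step is to observe that the assumption $K<2-2M_\infty$ guarantees that the coefficient $2-K-2M_\infty$ is \emph{strictly positive}; hence the right-hand side above is a linear function of $T$ with positive slope, and it becomes strictly positive precisely when $T(2-K-2M_\infty)>\Lambda$, that is, when \eqref{Obs1} holds. Therefore, for such $T$, dividing by $\eta(1)>0$ (recall $\eta$ is a strictly positive continuous function on $[0,1]$) we get
\[
\frac{1}{\eta(1)}\Big(T(2-K-2M_\infty)-\Lambda\Big)E_y(0)\le \int_0^T y_x^2(t,1)\,dt ,
\]
with a strictly positive constant in front of $E_y(0)$. This is exactly the observability inequality \eqref{observable} with $C=\frac{1}{\eta(1)}\big(T(2-K-2M_\infty)-\Lambda\big)>0$, so \eqref{adjointy} is observable in time $T$, and by the very definition $C_T=\sup\{C>0:\ \eqref{observable}\text{ holds}\}$ we conclude that this explicit $C$ is a lower bound for $C_T$, which is the last assertion of the corollary.

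There is essentially no obstacle here: the whole content is packaged in Theorem~\ref{Stima2new}, and the corollary is a one-line arithmetic consequence (positivity of the slope plus a threshold on $T$). The only points to be careful about are purely bookkeeping: that $2-K-2M_\infty>0$ is equivalent to the stated hypothesis $K<2-2M_\infty$ (which in particular forces $M_\infty<1$, since $K>1$ here, so the condition is genuinely a smallness condition on the drift), and that $\eta(1)$ is a fixed positive constant so that dividing by it is legitimate and preserves the sign of the constant. I would also remark, in passing, that this mirrors Remark~\ref{Rem9}: if $\|b\|_{L^\infty(0,1)}$ (and hence $M_\infty$) is small enough, the threshold time in \eqref{Obs1} is finite and the result is non-void.
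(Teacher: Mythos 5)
Your proposal is correct and follows exactly the route the paper intends: Corollary \ref{cor4} is obtained as an immediate consequence of Theorem \ref{Stima2new}, just as Corollary \ref{Observability0} follows from Theorem \ref{teoremanuovo}, by noting that $2-K-2M_\infty>0$ and that condition \eqref{Obs1} makes the constant multiplying $E_y(0)$ strictly positive, after which dividing by $\eta(1)>0$ yields \eqref{observable} and the stated lower bound on $C_T$. The only cosmetic caveat is that your shorthand for the max term clashes with the symbol $\Lambda$ used later in the paper for the bilinear form, so it would be better to pick a different letter.
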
}
 
As announced in the introduction, we now prove that boundary observability is no longer true when $K \ge 2$ (recall that $K$ is the constant that appears in \eqref{stima_a}). For this, it is enough to discuss two examples where as $a$ we consider the prototype, i.e. $a(x)= x^K$ with $K \ge 2$ and the pure degenerate case $b \equiv 0$. \textcolor{black}{The examples are based on those given in \cite{cfr1}, and for this reason we will only sketch them, referring to \cite{alabau} for the conclusion on the lack of controllability.}

\textcolor{black}{\begin{Example}\label{example1} Let $K\geq2$ and consider the equation
\begin{equation}\label{ex}
u_{tt} -x^Ku_{xx}=0, \qquad (t,x) \in (0,+\infty)\times (0,1).
\end{equation}
Then, if $K=2$, set 
\[
X:= \int_x^1\frac{dy}{y^{K/2}} \quad \mbox{ and} \quad U(t,X):= \frac{1}{x^{K/4}}u(t,x),
\]
while, if $K=2$, set
\[
X:= -\log x, \quad U(t,X):= \frac{1}{\sqrt{x}}u(t,x).
\]
In this way, equation \eqref{ex} becomes
\[
U_{tt} -U_{XX}+ c(X) U=0  \mbox{ or }U_{tt} -U_{XX}+ \ds\frac{1}{4}U=0, \ (t,X) \in  (0,+\infty)\times (0,+\infty),
\]
respectively. Now, proceeding as in $\cite{alabau}$, one proves that \eqref{ex} is not null controllable at time $T$, exploiting the finite speed of propagation and the fact that the equation is settled in a half-line.
\end{Example}}

\section{Null controllability}\label{section5}
In this section we study the problem of null controllability for \eqref{wave}.  More precisely, given $(u_0, u_1) \in L^2_{{\frac{1}{\sigma}}}(0,1)\times H^{-1}_{\frac{1}{\sigma}}(0,1)$, we look for a control $f \in L^2(0,T)$ such that the solution of \eqref{wave} satisfies \eqref{NC}.

First of all, we give the definition of a solution for \eqref{wave} {\sl by transposition}, which permits low regularity on the notion of solution itself: such a definition is formally obtained by re-writing the equation as $u_{tt}-\sigma(\eta u_x)_x$ thanks to \eqref{defsigma}, multiplying by $\frac{v}{\sigma}$ and  integrating by parts. Precisely:
\begin{definition} Let $f \in L^2_{\text{loc}}[0, +\infty)$ and $(u_0, u_1) \in L^2_{{\frac{1}{\sigma}}}(0,1) \times H^{-1}_{{\frac{1}{\sigma}}}(0,1)$. We say that $u$ is a solution by transposition of \eqref{wave} if
\[
u \in C^1([0, +\infty); H^{-1}_{{\frac{1}{\sigma}}}(0,1)) \cap C([0, +\infty); L^2_{{\frac{1}{\sigma}}}(0,1))
\]
and for all $T>0$
\begin{equation}\label{solution}
\begin{aligned}
\langle u_t(T), v^0_T\rangle_{H^{-1}_{{\frac{1}{\sigma}}}(0,1), H^1_{{\frac{1}{\sigma}}}(0,1)}&- \int_0^1\frac{1}{\sigma} u(T) v^1_Tdx = \langle u_1, v(0)\rangle_{H^{-1}_{{\frac{1}{\sigma}}}(0,1), H^1_{{\frac{1}{\sigma}}}(0,1)}\\
& -\int_0^1 \frac{1}{\sigma}u_0 v_t(0,x)dx-\eta(1) \int_0^Tf(t) v_x(t,1)dt
\end{aligned}
\end{equation}
for all $(v^0_T, v^1_T) \in H^1_{{\frac{1}{\sigma}}}(0,1) \times L^2_{{\frac{1}{\sigma}}}(0,1)$, where $v$ solves the backward problem
\begin{equation}\label{adjoint}
\begin{cases}
v_{tt} -av_{xx}-bv_x=0, & (t,x) \in (0, +\infty) \times (0,1),\\
v(t,1)=v(t,0)=0, & t \in (0, +\infty),\\
v(T,x) = v^0_T(x), & x \in (0,1),\\
v_t(T,x) = v^1_T(x), & x \in (0,1).
\end{cases}
\end{equation}
\end{definition}

Setting $y(t,x):= v(T-t,x)$, one has that $y$ satisfies
\eqref{adjointy} with $y^0_T(x)= v^0_T(x)$ and $y^1_T(x)=- v^1_T(x)$. Hence, thanks to Theorem \ref{esistenza} (and Remark \ref{tminore0}), problem \eqref{adjoint} admits a unique solution 
\[
v \in C^1([0, +\infty); L^2_{\frac{1}{\sigma}}(0,1)) \cap C([0, +\infty); H^1_{\frac{1}{\sigma}} (0,1))
\]
which depends continuously on the initial data $V_T:=( v^0_T, v^1_T) \in \mathcal H_0$.

By Theorem \ref{Energiacostante},  the energy is preserved in our setting, as well, so that the method of transposition done in \cite{alabau} continues to hold thanks to \eqref{stima1vera}, and so there exists a unique solution by transposition $u \in  C^1([0, +\infty); H^{-1}_{{\frac{1}{\sigma}}}(0,1)) \cap C([0, +\infty); L^2_{{\frac{1}{\sigma}}}(0,1))$ of \eqref{wave}, namely a solution of \eqref{solution}.

Now, we are ready to pass to null controllability, recalling that by linearity and reversibility of equation \eqref{wave},  null controllability for any initial data $(u_0,u_1)$ is equivalent to exact controllability, see \cite{alabau}. In order to prove that \eqref{wave} is null controllable, let us start with
\begin{Assumptions}\label{Ass4}
Assume 
\begin{itemize}
\item  Hypothesis \ref{Ass1} with $K<2-2M$ and \eqref{cor0}, or
\item \textcolor{black}{Hypothesis \ref{Ass3new} with $K<2-2M_\infty$ and  \eqref{Obs1}.}
\end{itemize}
\end{Assumptions}

Now, consider the bilinear form $\Lambda:  \mathcal H_0\times \mathcal H_0\rightarrow \R$ defined as 
\[
\Lambda (V_T,W_T) :=\eta(1) \int_0^T v_x(t,1)w_x(t,1) dt,
\]
where $v$ and $w$ are the solutions of \eqref{adjoint} associated to the final data $V_T:=( v^0_T, v^1_T)$ and $W_T:=(w^0_T, w^1_T)$, respectively. The following lemma holds. 
\begin{lemma}\label{lambda}
Assume Hypothesis $\ref{Ass4}$. Then, the bilinear form $\Lambda$ is continuous and coercive.
\end{lemma}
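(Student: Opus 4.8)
The plan is to establish the two required properties of $\Lambda$ separately, both by reducing to the energy $E_v(0)$ of the adjoint solution and invoking the estimates already proven in Section \ref{section3}. First recall that by Theorem \ref{esistenza} the map $V_T\mapsto v$ is linear, and by the change of variables $y(t,x)=v(T-t,x)$ combined with Theorem \ref{Energiacostante} we have $E_v(t)=E_v(0)=\frac12\|V_T\|_{\mathcal H_0}^2$ for all $t$; in particular $E_v(0)$ and $\|V_T\|_{\mathcal H_0}^2$ are comparable (in fact equal up to the factor $1/2$). So everything can be phrased in terms of $\|V_T\|_{\mathcal H_0}$.

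For continuity, I would use the Cauchy--Schwarz inequality in $L^2(0,T)$ to write
\[
|\Lambda(V_T,W_T)|\le \eta(1)\left(\int_0^T v_x^2(t,1)\,dt\right)^{1/2}\left(\int_0^T w_x^2(t,1)\,dt\right)^{1/2},
\]
and then apply the upper bound \eqref{stima1vera} of Theorem \ref{Stima1} to each factor (this is exactly why that theorem was proved for mild solutions). This gives $|\Lambda(V_T,W_T)|\le C_T' E_v(0)^{1/2}E_w(0)^{1/2}=\tfrac12 C_T'\|V_T\|_{\mathcal H_0}\|W_T\|_{\mathcal H_0}$ with $C_T'=\bigl(2(2+K+M)T+4\max\{1/a(1),1\}\bigr)$, which is the desired boundedness of the bilinear form on $\mathcal H_0\times\mathcal H_0$. (In the (SD) case one replaces $M$ by $M_\infty$, using the validity of Theorem \ref{Stima1} noted after Hypothesis \ref{Ass3new}.)

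For coercivity, take $W_T=V_T$, so that $\Lambda(V_T,V_T)=\eta(1)\int_0^T v_x^2(t,1)\,dt$. Under Hypothesis \ref{Ass4}, i.e. either Hypothesis \ref{Ass1} with $K<2-2M$ and \eqref{cor0}, or Hypothesis \ref{Ass3new} with $K<2-2M_\infty$ and \eqref{Obs1}, Corollary \ref{Observability0} (respectively Corollary \ref{cor4}) tells us that \eqref{adjointy} is observable in time $T$, that is
\[
\Lambda(V_T,V_T)=\eta(1)\int_0^T v_x^2(t,1)\,dt\ge \eta(1)\,C_T\,E_v(0)=\frac{\eta(1)C_T}{2}\|V_T\|_{\mathcal H_0}^2,
\]
with $C_T>0$ the observability constant, which is strictly positive precisely because of the lower bounds \eqref{stima2vera}/\eqref{stima2veranew} together with the smallness and largeness assumptions on $b$ and $T$. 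This is coercivity. Finally, symmetry and bilinearity of $\Lambda$ are immediate from the definition and the linearity of $V_T\mapsto v$.

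The main obstacle is essentially bookkeeping rather than a genuine difficulty: one must make sure that the observability and upper-bound estimates, originally stated for classical (and then mild) solutions of \eqref{adjointy}, transfer correctly to the solutions $v$ of the backward problem \eqref{adjoint} via the time reversal $y(t,x)=v(T-t,x)$, noting that $y_x(t,1)^2=v_x(T-t,1)^2$ so the boundary integrals $\int_0^T v_x^2(t,1)\,dt$ and $\int_0^T y_x^2(t,1)\,dt$ coincide, and that $E_v(0)=E_y(0)$ by conservation of energy. Once this identification is in place the lemma follows directly by combining Theorem \ref{Stima1} with Corollary \ref{Observability0} or Corollary \ref{cor4}; no new estimates are needed.
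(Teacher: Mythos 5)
Your proposal is correct and follows essentially the same route as the paper: Cauchy--Schwarz plus the upper bound \eqref{stima1vera} of Theorem \ref{Stima1} for continuity, and the observability inequality of Theorem \ref{teoremanuovo} (resp.\ Theorem \ref{Stima2new}), equivalently Corollary \ref{Observability0} (resp.\ Corollary \ref{cor4}), combined with conservation of energy for coercivity. The time-reversal bookkeeping you flag is handled the same way in the paper, which evaluates the energy at $t=T$ so that it equals $\tfrac12\|V_T\|_{\mathcal H_0}^2$ directly.
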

\begin{proof}
By Theorem \ref{Energiacostante}, $E_v$ and $E_w$ are constant in time and thanks to \eqref{stima1vera}, one has that $\Lambda$ is continuous. Indeed, by H\"older's inequality, \textcolor{black}{Theorem \ref{Stima1}}, we get
\[
\begin{aligned}
|\Lambda (V_T,W_T)|& \le \eta (1)\int_0^T\left| v_x(t,1)w_x(t,1) \right| dt \\
&\le \left(\eta(1) \int_0^Tv_x^2(t,1)dt\right)^{\frac{1}{2}}\left(\eta(1)\int_0^T  w_x^2(t,1)dt\right)^{\frac{1}{2}}\\
&\leq CE^{\frac{1}{2}}_v(T)E^{\frac{1}{2}}_w(T)\\
&= C \left( \int_0^1 \frac{(v^1_T)^2(x)}{\sigma}dx +\int_0^1\eta v_x^2(T,x) dx\right)^{\frac{1}{2}}\times\\
&\left( \int_0^1 \frac{(w^1_T)^2(x)}{\sigma}dx + \int_0^1\eta w_x^2(T,x) dx\right)^{\frac{1}{2}}\\
&= C \|(v(T), v_t(T))\|_{\mathcal H_0} \|(w(T), w_t(T))\|_{\mathcal H_0} = C \|V_T\|_{\mathcal H_0} \|W_T\|_{\mathcal H_0}
\end{aligned}
\]
for a positive constant $C$ independent of   $(V_T, W_T) \in \mathcal H_0\times \mathcal H_0$.

In a similar way, one can prove that $\Lambda$ is coercive. Indeed, by Theorem \ref{teoremanuovo} or \ref{Stima2new},  for all $V_T \in \mathcal H_0$, one immediately has
\[
\begin{aligned}
&\Lambda (V_T, V_T) =  \int_0^T \eta(1) v_x^2(t,1)dt \ge  C_T E_v(0)=  C_T E_v(T) \ge C \|V_T\|_{\mathcal H_0}^2,
\end{aligned}
\]
for a positive constant $C$.
\end{proof}

Function $\Lambda$ is used to prove the null controllability for the original problem \eqref{wave}. For this, let us start defining $T_0$ as the lower bound found in Corollaries \ref{Observability0} and \ref{cor4}, which changes according to the different assumptions used therein.

\begin{theorem}\label{principale}
Assume Hypothesis $\ref{Ass4}$. Then, for all $T>T_0$ and for every $(u_0,u_1)$ in $L^2_{\frac{1}{\sigma}}(0,1)\times H^{-1}_{\frac{1}{\sigma}}(0,1)$ there exists a control $f \in L^2(0,T)$  such that the solution of \eqref{wave} satisfies 
\[
u(T, x)=u_t(T,x)=0 \quad \text{for all } x \in (0,1).
\]
\end{theorem}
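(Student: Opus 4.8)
The plan is to apply the Hilbert Uniqueness Method, exactly in the spirit sketched in the introduction for the nondegenerate equation \eqref{classica} and carried out in \cite{alabau} for the divergence-form degenerate case, with the bilinear form $\Lambda$ of Lemma \ref{lambda} playing the role of the scalar product that makes the observability inequality into coercivity. First I would fix $T>T_0$, so that Hypothesis \ref{Ass4} guarantees both the direct inequality \eqref{stima1vera} (Theorem \ref{Stima1}) and the observability inequality \eqref{stima2vera} or \eqref{stima2veranew} (Theorems \ref{teoremanuovo}, \ref{Stima2new}), hence the continuity and coercivity of $\Lambda$ on $\mathcal H_0\times\mathcal H_0$.

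Next I would introduce the linear functional $\mathcal L:\mathcal H_0\to\R$ defined, for $V_T=(v^0_T,v^1_T)$, by
\[
\mathcal L(V_T):=\langle u_1, v(0)\rangle_{H^{-1}_{\frac{1}{\sigma}}(0,1),H^1_{\frac{1}{\sigma}}(0,1)}-\int_0^1\frac{1}{\sigma}u_0\,v_t(0,x)\,dx,
\]
where $v$ solves the backward problem \eqref{adjoint} with final data $V_T$. This is bounded on $\mathcal H_0$: indeed $V_T\mapsto (v(0),v_t(0))$ is an isomorphism of $\mathcal H_0$ onto itself (Theorem \ref{esistenza} together with Remark \ref{tminore0} and the energy identity of Theorem \ref{Energiacostante}), so $|\mathcal L(V_T)|\le C\big(\|u_1\|_{H^{-1}_{\frac{1}{\sigma}}}+\|u_0\|_{L^2_{\frac{1}{\sigma}}}\big)\|V_T\|_{\mathcal H_0}$. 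By Lemma \ref{lambda} and Lax--Milgram, there is a unique $\widetilde V_T=(\widetilde v^0_T,\widetilde v^1_T)\in\mathcal H_0$ with $\Lambda(\widetilde V_T,W_T)=\mathcal L(W_T)$ for all $W_T\in\mathcal H_0$. Denote by $\widetilde v$ the corresponding solution of \eqref{adjoint} and set
\[
f(t):=-\eta(1)\,\widetilde v_x(t,1)\in L^2(0,T),
\]
the membership in $L^2(0,T)$ being exactly the content of \eqref{stima1vera}. (The sign and the factor $\eta(1)$ are dictated by the transposition identity \eqref{solution}.)

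Then I would let $u$ be the solution by transposition of \eqref{wave} associated with this $f$ and this initial datum $(u_0,u_1)$, whose existence and uniqueness were recorded right after \eqref{adjoint}. Plugging $f=-\eta(1)\widetilde v_x(\cdot,1)$ into \eqref{solution} and using the variational identity $\Lambda(\widetilde V_T,W_T)=\mathcal L(W_T)$, the right-hand side of \eqref{solution} collapses, leaving
\[
\langle u_t(T),v^0_T\rangle_{H^{-1}_{\frac{1}{\sigma}},H^1_{\frac{1}{\sigma}}}-\int_0^1\frac{1}{\sigma}u(T)\,v^1_T\,dx=0
\]
for every $(v^0_T,v^1_T)\in H^1_{\frac{1}{\sigma}}(0,1)\times L^2_{\frac{1}{\sigma}}(0,1)$. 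Choosing $v^0_T=0$ and letting $v^1_T$ range over $L^2_{\frac{1}{\sigma}}(0,1)$ forces $u(T,\cdot)=0$ in $L^2_{\frac{1}{\sigma}}(0,1)$; choosing $v^1_T=0$ and letting $v^0_T$ range over the dense set $H^1_{\frac{1}{\sigma}}(0,1)$ forces $u_t(T,\cdot)=0$ in $H^{-1}_{\frac{1}{\sigma}}(0,1)$. This is precisely \eqref{NC}, and the proof is complete.

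The routine points to check carefully — none of them a genuine obstacle, given the machinery already set up — are: that $W_T\mapsto\mathcal L(W_T)$ and $W_T\mapsto\Lambda(\widetilde V_T,W_T)$ are the two sides of \eqref{solution} after the substitution $f=-\eta(1)\widetilde v_x(\cdot,1)$, so that the bookkeeping of signs and of the duality pairing matches; that the isomorphism $V_T\mapsto(v(0),v_t(0))$ is indeed bounded with bounded inverse, which is immediate from conservation of energy (Theorem \ref{Energiacostante}) applied to $y(t,\cdot)=v(T-t,\cdot)$; and that the density of $H^1_{\frac{1}{\sigma}}(0,1)$ in $L^2_{\frac{1}{\sigma}}(0,1)$, used in the last density argument, holds — it does, since $H^1_{\frac{1}{\sigma}}(0,1)=H^1_0(0,1)$ by Corollary \ref{equivalenze}. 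The only place where the hypotheses of Hypothesis \ref{Ass4} are truly essential is in invoking coercivity of $\Lambda$, i.e. in the observability inequality; everything downstream is the standard HUM duality argument.
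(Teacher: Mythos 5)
Your strategy is exactly the paper's: HUM via Lax--Milgram applied to the bilinear form $\Lambda$ and the linear functional $\mathcal L$, followed by comparison with the transposition identity \eqref{solution}; your extra remark justifying the boundedness of $\mathcal L$ through the isomorphism $V_T\mapsto(v(0),v_t(0))$ and conservation of energy is a welcome addition the paper leaves implicit. The one concrete defect is the normalization of the control. With the paper's conventions, $\Lambda(\widetilde V_T,W_T)=\eta(1)\int_0^T\widetilde v_x(t,1)\,w_x(t,1)\,dt$ already carries the factor $\eta(1)$, and \eqref{solution} already carries the minus sign in front of $\eta(1)\int_0^T f(t)\,v_x(t,1)\,dt$; hence the cancellation requires
\[
\eta(1)\int_0^T f(t)\,w_x(t,1)\,dt=\Lambda(\widetilde V_T,W_T)=\mathcal L(W_T),
\]
i.e. $f(t)=\widetilde v_x(t,1)$, which is the paper's choice. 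With your choice $f(t)=-\eta(1)\,\widetilde v_x(t,1)$ the right-hand side of \eqref{solution} becomes $\mathcal L(W_T)+\eta(1)\,\Lambda(\widetilde V_T,W_T)=\bigl(1+\eta(1)\bigr)\mathcal L(W_T)$, which does not vanish since $\eta>0$, so the final step fails as written. This is a bookkeeping slip rather than a conceptual gap --- you yourself flag that the signs need checking --- and once $f$ is replaced by $\widetilde v_x(\cdot,1)$ the remainder of your argument coincides with the paper's proof.
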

\begin{proof}
Consider the continuous linear  map $\mathcal L: \mathcal H_0 \rightarrow \R$  defined as
\[
\mathcal L (V_T) := -\int_0^1 \frac{u_0 v_t(0,x)}{\sigma}dx+\langle u_1, v(0)\rangle_{H^{-1}_{{\frac{1}{\sigma}}}(0,1), H^1_{{\frac{1}{\sigma}}}(0,1)},
\]
where $v$ is the solution of \eqref{adjoint} associated to the final data $V_T:=( v^0_T, v^1_T) \in \mathcal H_0$. Thanks to Lemma \ref{lambda} and by the Lax-Milgram Theorem, there exists a unique $\bar V_T \in \mathcal H_0$ such that
\begin{equation}\label{LM}
\Lambda (\bar V_T, W_T)= \mathcal L(W_T) \mbox{ for all $W_T \in \mathcal H_0$.}
\end{equation}
Set $f(t):= \bar v_x(t,1)$, $\bar v$ being the solution of \eqref{adjoint} with initial data $\bar V_T$. Then, by \eqref{LM}
\begin{equation}\label{sostituzione}
\begin{aligned}
\eta(1)\int_0^T  f(t) w_x(t,1)dt&=\eta(1) \int_0^T \bar v_x(t,1)w_x(t,1)dt = \Lambda (\bar V_T, W_T)= \mathcal L(W_T) \\
&= \langle u_1, w(0)\rangle_{H^{-1}_{{\frac{1}{\sigma}}}(0,1), H^1_{{\frac{1}{\sigma}}}(0,1)} -\int_0^1 \frac{1}{\sigma}u_0 w_t(0,x)dx
\end{aligned}
\end{equation}
for all $W_T \in \mathcal H_0$.

Finally, denote by $u$ the solution by transposition of \eqref{wave} associated to the function $f$ just introduced above. Then we have
\begin{equation}\label{solutionpoi}
\begin{aligned}
\eta(1) \int_0^Tf(t) w_x(t,1)dt &= -\langle u_t(T), w^0_T\rangle_{H^{-1}_{{\frac{1}{\sigma}}}(0,1), H^1_{{\frac{1}{\sigma}}}(0,1)}+ \int_0^1\frac{1}{\sigma} u(T) w^1_Tdx \\
 &+ \langle u_1, w(0)\rangle_{H^{-1}_{{\frac{1}{\sigma}}}(0,1), H^1_{{\frac{1}{\sigma}}}(0,1)}- \int_0^1 \frac{1}{\sigma}u_0 w_t(0,x)dx.
\end{aligned}
\end{equation}
By \eqref{sostituzione} and \eqref{solutionpoi}, it follows that
\[
\langle u_t(T), w^0_T\rangle_{H^{-1}_{{\frac{1}{\sigma}}}(0,1), H^1_{{\frac{1}{\sigma}}}(0,1)} - \int_0^1\frac{1}{\sigma} u(T) w^1_Tdx=0
\]
for all $(w^0_T, w^1_T) \in \mathcal H_0$. Hence, we have
\[
u(T,x)=u_t(T,x)=0 \quad \text{for all } x \in (0,1).
\]
\end{proof}

\section*{Acknowledgments}
I. B. acknowledges support from {\it ACRI Young Investigator Training Program 2018}. 
G. F. and D. M. are members of  {\it Gruppo Nazionale per l'Analisi Ma\-te\-matica, la Probabilit\`a e le loro Applicazioni (GNAMPA)} of  Istituto Nazionale di Alta Matematica (INdAM). G.F.  is a member of {\it UMI ``Modellistica Socio-Epidemiologica (MSE)''} and is supported by FFABR {\it Fondo per il finanziamento delle attivit\`a base di ricerca} 2017, by  PRIN 2017-2019 {\it Qualitative and quantitative aspects of nonlinear PDEs} and by the
HORIZON$_-$EU$_-$DM737 project 2022 {\it COntrollability of PDEs in the Applied Sciences (COPS)} at Tuscia University. D. M. is supported by INdAM-GNAMPA Project 2022  {\it Elliptic PDE's with mixed diffusion}, by FFABR {\it Fondo per il finanzia\-mento delle attivit\`a base di ricerca} 2017 and by the
HORIZON$_-$EU$_-$DM737 project 2022 {\it COntrollability of PDEs in the Applied Sciences (COPS)} at Tuscia University.

\bibliographystyle{siamplain}
\bibliography{references}

\end{document}